\newcommand{\subeq}{\subseteq}
\newcommand{\ub}{\underbrace}
\newcommand{\wh}{\widehat}
\DeclareMathOperator{\vol}{vol}
\DeclareMathOperator{\Gal}{Gal}
\newcommand{\eps}{\varepsilon}
\newcommand{\lam}{\lambda}
\newcommand{\Lam}{\Lambda}
\newcommand{\vhi}{\varphi}
\newcommand{\QQ}{\mathbb Q}
\newcommand{\RR}{\mathbb R}
\newcommand{\TT}{\mathbb T}
\newcommand{\ZZ}{\mathbb Z}
\newcommand{\one}{\mathbf{1}}
\newcommand{\cC}{\mathcal C}
\newcommand{\cI}{\mathcal I}
\newcommand{\cO}{\mathcal O}
\newcommand{\cP}{\mathcal P}
\newcommand{\fa}{\mathfrak a}
\newcommand{\ff}{\mathfrak f}
\newcommand{\fp}{\mathfrak p}
\newcommand{\bigp}[1]{\left( #1 \right)} 
\newcommand{\bigb}[1]{\left[ #1 \right]} 
\newcommand{\bigc}[1]{\left\{ #1 \right\}} 
\newcommand{\biga}[1]{\left\langle #1 \right\rangle} 
\newcommand{\ceil}[1]{\left\lceil #1 \right\rceil}
\renewcommand{\norm}[1]{\| #1 \|}
\newcommand{\p}[1]{\partial #1}
\newcommand{\textif}{\quad\text{if }}
\newtheorem{thm}{Theorem}[section]
\newtheorem{theorem}[thm]{Theorem}
\newtheorem{lemma}[thm]{Lemma}
\title{A Bombieri-Vinogradov Theorem for primes in short intervals and small sectors}
\author{Tanmay Khale}
\address[Tanmay Khale]{Department of Mathematics, Lafayette College}
\email{khalet@lafayette.edu}
\author{Cooper O'Kuhn}
\address[Cooper O'Kuhn]{Department of Mathematics, University of Florida}
\email{cokuhn@ufl.edu}
\author{Apoorva Panidapu}
\address[Apoorva Panidapu]{Department of Mathematics, San Jose State University}
\email{apoorva002@gmail.com}
\author{Alec Sun}
\address[Alec Sun]{Department of Mathematics, Harvard University}
\email{sundogx@gmail.com}
\author{Shengtong Zhang}
\address[Shengtong Zhang]{Department of Mathematics, Massachusetts Institute of Technology}
\email{stzh1555@mit.edu}
\begin{document}

\begin{abstract}
   Let $K$ be a finite Galois extension of $\QQ$. We count primes in short intervals represented by the norm of a prime ideal of $K$ satisfying a small sector condition determined by Hecke characters. We also show that such primes are well-distributed in arithmetic progressions in the sense of Bombieri-Vinogradov. This extends previous work of Duke and Coleman.
\end{abstract}

\maketitle

\section{Introduction}\label{introduction}
A famous open problem is whether or not there exist infinitely many primes of the form $p = n^2 + 1$. A natural approximation to this problem is to count primes of the form $p = a^2 + b^2$ with $\abs{a} < p^{\frac12 - \delta}$ for some $\delta>0$. The set of such primes has zero density in the set of all primes. Counting these primes is equivalent to counting Gaussian primes $a+bi$ that lie in a small sector of the complex plane. Kubilius \cite{kubilius} proved the existence of a constant $\delta_0>0$ such that for all $0<\delta<\delta_0$, we have 
$$\#\bigc{p \text{ prime} \colon p\le x, p = a^2 + b^2, \abs{a} < p^{\frac{1}{2} - \delta}} \sim \frac{cx^{1 - \delta}}{\log{x}}$$
for some constant $c$. By partial summation, we have the equivalent asymptotic
\[
\sum_{\substack{p\leq x \\ p = a^2+b^2 \\ |a|<p^{\frac{1}{2}-\delta}}}\log p \sim cx^{1-\delta}.
\]
The best $\delta_0$ that has been obtained to date is $\frac{12}{37}$ by Maknys \cite{maknys}. 

More generally, one can consider primes represented by norm forms for an imaginary quadratic extension $K = \QQ(\sqrt{-m})$. For example, when $m\equiv 1,2\bmod 4$ is square-free, a prime $p$ is of the form $x^2 + my^2$ if and only if $p = N\fp$ for some principal ideal $\fp\subeq \cO_K$. Fix a nonzero ideal $\ff\subeq \cO_K$, and let $g$ be the number of units $\eps\equiv 1\bmod \ff$ in $\QQ(\sqrt{-m})$. Let $\lam$ denote a generator for the infinite-order Hecke characters mod $\ff$ for an imaginary quadratic extension $\QQ(\sqrt{-m})$ such that $$\lam((\alpha)) = \bigp{\frac{\alpha}{\abs{\alpha}}}^{g}$$ for principal ideals $(\alpha)$ with $\alpha\equiv 1 \bmod \ff$. Denote by $\cI_\ff$ the ideal class group mod $\ff$, and consider the distribution of prime ideals within a particular ideal class mod $\ff$, which we denote by $I$.


Refining the result of Kubilius, Coleman \cite{coleman-quadratic} showed that for imaginary quadratic extensions $K = \QQ(\sqrt{-m})$, prime ideals with argument in a specified range exhibit regularity in short intervals $[x, x+h)$ with $h = x^{1-\delta'}$ for some $\delta'>0$. To be precise, Coleman \cite{coleman-quadratic} proved that for small $\eps>0$, we have $$\sum_{\substack{\fp\in I \\ N\fp = p \text{ prime}\\ x\le p < x+h \\ \phi_1 \le \arg \lam(\fp) \le \phi_2}} \log p \sim \frac{(\phi_2 - \phi_1)h}{2\pi\abs{\cI_\ff}}$$ 
for $0 \le \phi_1 \le \phi_2 \le 2\pi, \phi_2 - \phi_1 > x^{-\frac{5}{24} + \eps}$, and $x^{\frac{19}{24} + \eps} \le h \le x$.

In the case of an imaginary quadratic extension, a prime ideal $\fp\subeq \cO_K$ lying over an unramified prime $p$ satisfies the conditions in Coleman's result \cite{coleman-quadratic} except the one related to the argument if and only if its conjugate prime ideal $\bar{\fp}$, does, and we have $\arg \lam(\fp) = -\arg \lam(\bar{\fp})$. Hence Coleman's result \cite{coleman-quadratic} also produces an asymptotic for the count of rational primes represented by norms of these prime ideals, with the caveat that for intervals $(\phi_1, \phi_2)$ symmetric about $\pi$, each rational prime is counted twice.

Coleman claimed a generalization of this prime counting result for all number fields $K$ in the proof of \cite[Theorem 2]{coleman-general} using a purported one-to-one correspondence between rational primes and prime ideals. However, we observe from a close inspection of \cite{coleman-general} that this claim is not sufficiently justified because the number of prime ideals $\fp$ lying over a prime $p$ that satisfy a certain argument condition may not be constant.

In this paper, we extend Coleman's count of primes represented by norms of prime ideals in the case of imaginary quadratic extensions. We produce an asymptotic count of rational primes in the more general setting of Galois extensions $K/\QQ$. To state our results, we first require some definitions of Hecke characters.

\subsection{Hecke characters}

Let $K$ be a number field of degree $n = r_1 + 2r_2$ with real embeddings $\sigma_1,\ldots,\sigma_{r_1 }$ and complex embeddings $\sigma_{r_1 + 1},\ldots,\sigma_{r_1 + r_2}$, where we denote the image of $\alpha\in K$ by $\sigma_j(\alpha) = \alpha^{(j)}$. Let $\ff\subeq \cO_K$ be a fixed nonzero ideal. Let $I_\ff$ denote the group of fractional ideals relatively prime to $\ff$, and define a group of principal ideals $$P_\ff = \bigc{ (\alpha)\in I_\ff \colon \alpha\in K^\times, \alpha\equiv 1 \bmod\ff, \alpha\succ 0}.$$ Here the notation $\alpha\succ 0$ means that $\alpha$ is totally positive. An infinite-order Hecke character $\lam$ is a character on $I_\ff$ with the property that there exist suitable $v_j\in \RR$ and $u_l\in \ZZ$ such that on principal ideals $(\alpha)\in P_\ff$, we have $$\lam((\alpha)) = \prod_{j=1}^{r_1+r_2} \big|\alpha^{(j)}\big|^{iv_j} \prod_{l=r_1+1}^{r_1 + r_2} \bigp{\frac{\alpha^{(l)}}{\abs{\alpha^{(l)}}}}^{u_l}.$$ For this to be well-defined, we need $\lam(\eps) = 1$ for all units $\eps\equiv 1 \bmod \ff$ satisfying $\eps\succ 0$. We also require $\lam(\alpha) = 1$ for all $\alpha\in \QQ$, implying the condition $$\sum_{j=1}^{r_1 + r_2}v_j = 0.$$ The group of characters $\lam$ has a multiplicative basis of $d = n-1$ elements. Fixing such a multiplicative basis $\lam_1,\ldots,\lam_{d}$, each Hecke character mod $\ff$ can be written as $$\mu\lam^\mathbf{m}(\fa) = \mu(\fa) \prod_{j=1}^{d} \lam_j^{m_j}(\fa),\quad \fa\in I_\ff,$$
for some $\mathbf{m} = (m_1,\ldots,m_d)\in \ZZ^{d}$ and for some character $\mu$ on $I_\ff/P_\ff$, which we refer to as a narrow class character mod $\ff$. Define the argument $\vec{\phi}(\fa) = (\phi_1(\fa),\ldots,\phi_d(\fa))\in \RR^{d}/\ZZ^d = \TT^d$ for $\fa\in I_\ff$ by $\lam_j(\fa) = e^{2\pi i \phi_j(\fa)}$. Let $\norm{\cdot}$ denote the sup-norm on $\TT^d$.

\subsection{Main results}

We now state the two main results of this paper. The first main theorem counts primes $p$ in short intervals represented by norms of prime ideals with argument lying in a narrow sector of the form $\norm{\vec{\phi}(\fp) - \vec{\phi}_0} < p^{-\delta}$. We also impose the condition that these prime ideals lie in a fixed narrow ideal class, which is motivated by counting primes represented by norm forms. Let $\fa\in I$ be an ideal with a special basis $\{\alpha_j\}$ satisfying the following conditions in \cite[Section 3.2]{duke}:
\begin{itemize}
    \item $\det\bigp{\alpha_j^{(i)}} = \sqrt{\Delta_{K}} \cdot N\fa$, where $\alpha_j^{(i)}$ denotes the matrix whose $(i,j)$-th entry is $\alpha_j^{(i)}$ and $\Delta_{K}$ denotes the discriminant of $K$.
    \item $\alpha_m\succ 0$ for some $m$.
\end{itemize}
A prime $p$ can be represented as $$p = N\bigp{\sum x_j \alpha_j} N(\fa)^{-1},\quad \sum x_j \alpha_j\succ 0$$ if and only if $p = N\fp$ for some prime ideal $\fp\in I^{-1}$. Duke \cite[Theorem 3.2]{duke} notes that counting primes in a given narrow ideal class whose associated prime ideals lie in small sectors is related to counting primes represented by norm forms of a number field $K$ with all but one component small via the geometric interpretation of $\vec{\phi}$. The first main theorem of this paper is:

\begin{theorem}\label{theorem:galois-primes}
Let $K/\QQ$ be a finite Galois extension of degree $n$. Let $0<\delta,\delta'< \frac{2}{5n}$ and $\vec{\phi}_0 \in \TT^{n-1}$ be fixed. Denote by $\cI$ the narrow ideal class group mod $(1)$, and let $I\in \cI$ be a fixed narrow ideal class mod $(1)$. Define
\[
\mathcal{P}_{I,\delta}(\vec{\phi}_0)=\bigc{p \in \cP \colon \text{$\exists \mathfrak{p}\in I$ with $N\mathfrak{p}=p$ and $\norm{\vec{\phi}(\mathfrak{p})-\vec{\phi}_0}<p^{-\delta}$}}.
\]
Then there are constants $c$ and $c_1>0$ depending on $\delta$ and $\delta'$ such that for any constant $0<c_2<\frac13$ we have $$\sum_{\substack{ x\le p < x+h \\ p\in\mathcal{P}_{I,\delta}(\vec{\phi}_0)}} \log p  = \frac{cx^{-(n-1)\delta} h}{\abs{\cI}} \bigp{1 + O\bigp{e^{-c_1 (\log x)^{c_2}}}},\quad h = x^{1-\delta'}.$$ 
\end{theorem}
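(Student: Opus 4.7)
The plan is to convert the rational prime sum into a sum over prime ideals, then detect the sector, short interval, and ideal class conditions by Hecke characters, and finally estimate the resulting character sums using an explicit formula together with zero-free region and zero-density bounds for Hecke $L$-functions. For the first step, the dominant contribution to $\cP_{I,\delta}(\vec\phi_0)$ comes from rational primes $p$ that split with at least one degree-one prime ideal above them; primes with only higher-degree primes above contribute $O(x^{1/2})$ and are negligible. For such $p$, the Galois group permutes the prime ideals above $p$ and acts linearly on the arguments $\vec\phi(\fp)$ via its action on Hecke characters, so for a generic $\vec\phi_0$ and the narrow sector of radius $p^{-\delta}$, at most one Galois conjugate above $p$ lies in the sector. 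Handling this overcounting carefully --- precisely the step the paper flags as insufficiently justified in Coleman's generalization --- is the first essential piece.

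\textbf{Hecke character expansion.} Next, pick a smooth cutoff $\Phi$ approximating $\one_{[x, x+h)}$ and a smooth bump $\psi$ on $\TT^{n-1}$ approximating the indicator of $\norm{\vec\phi - \vec\phi_0} < x^{-\delta}$. Expand $\psi$ in its Fourier series
\[
\psi(\vec\phi) = \sum_{\mathbf{m} \in \ZZ^{n-1}} \wh\psi(\mathbf m)\, e^{2\pi i \mathbf m \cdot \vec\phi},
\]
with $\wh\psi(\mathbf m)$ concentrated on $\norm{\mathbf m} \lesssim x^{\delta + \eps}$, and detect the narrow ideal class using orthogonality $\one_{\fp \in I} = |\cI|^{-1} \sum_\mu \bar\mu(I) \mu(\fp)$. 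Since $\lam^{\mathbf m}(\fp) = e^{2\pi i \mathbf m \cdot \vec\phi(\fp)}$, the smoothed prime ideal sum becomes
\[
\frac{1}{|\cI|} \sum_{\mathbf m \in \ZZ^{n-1}} \wh\psi(\mathbf m) \sum_\mu \bar\mu(I) \sum_\fp (\mu\lam^{\mathbf m})(\fp)\, \Phi(N\fp) \log N\fp.
\]
The main term arises from the trivial Hecke character ($\mu$ principal and $\mathbf m = \mathbf 0$), contributing $|\cI|^{-1} \wh\psi(\mathbf 0) h$, which matches the target $c |\cI|^{-1} x^{-(n-1)\delta} h$ of the theorem.

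\textbf{Explicit formula and zero estimates.} For each nontrivial Hecke character $\chi = \mu\lam^{\mathbf m}$, apply the explicit formula to write
\[
\sum_\fp \chi(\fp) \Phi(N\fp) \log N\fp = -\sum_\rho \widetilde\Phi(\rho) + \text{(lower-order terms)},
\]
where the sum runs over nontrivial zeros $\rho$ of $L(s, \chi)$. Since $\Phi$ is supported in an interval of length $h = x^{1-\delta'}$, $\widetilde\Phi(\rho)$ decays rapidly once $|\mathrm{Im}\,\rho| \gg T := x^{\delta' + \eps}$, so only zeros with $|t| \le T$ matter. I would invoke a Vinogradov--Korobov zero-free region $\sigma > 1 - c (\log(Q(\chi) T))^{-2/3}$ uniform in $\chi$, together with a log-free zero density estimate of the form
\[
\sum_{\norm{\mathbf m} \le M} \sum_\mu N(\sigma, T, \mu \lam^{\mathbf m}) \ll (|\cI|\, M\, T)^{A(1-\sigma)} (\log MT)^B,
\]
with $M = x^{\delta + \eps}$. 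These bounds give a per-character error of size $x \exp(-c_1 (\log x)^{c_2})$, and the hypothesis $\delta, \delta' < \frac{2}{5n}$ should ensure that summing over the $\lesssim x^{(n-1)\delta + \eps}$ relevant characters keeps the cumulative error well below the main term $x^{-(n-1)\delta} h$.

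\textbf{Principal obstacle.} The hardest part will be establishing the uniform Vinogradov--Korobov zero-free region and especially the log-free zero density estimate for the full family $\{L(s, \mu\lam^{\mathbf m}) : \mu \in \wh{\cI},\ \norm{\mathbf m} \le M\}$, sharp simultaneously in the $t$-aspect (for the short interval $h = x^{1-\delta'}$) and the $\mathbf m$-aspect (for the narrow sector of radius $p^{-\delta}$). The numerical constraint $\delta, \delta' < \frac{2}{5n}$ should drop out of balancing these two aspects in the zero density estimate. A secondary but genuinely delicate point is the Galois-equivariance bookkeeping of the first paragraph: the passage from rational primes to prime ideals must yield the factor $1/|\cI|$ without hidden overcounting from multiple Galois conjugates above $p$ simultaneously landing in the sector.
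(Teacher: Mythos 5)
Your outline has the right global architecture (pass to prime ideals, detect the class, sector, and interval with Hecke characters, apply an explicit formula, and close with a Vinogradov--Korobov zero-free region and a zero-density bound for the family $\{L(s,\mu\lam^{\mathbf m})\}$), and you correctly locate the numerical constraint $\delta,\delta'<\tfrac{2}{5n}$ in the zero-density exponent. But the first step contains a genuine gap, and it is precisely the step you yourself flag as ``the first essential piece.'' You reduce to prime ideals by asserting that ``for a generic $\vec\phi_0$, at most one Galois conjugate above $p$ lies in the sector.'' The theorem is stated for \emph{every} $\vec\phi_0\in\TT^{n-1}$, not a generic one; indeed the most natural choices (e.g.\ $\vec\phi_0=\mathbf 0$) are exactly the non-generic ones for which several conjugates of a split prime can land in the box simultaneously. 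Even for generic $\vec\phi_0$ you give no argument for the claim, and without it the passage from ``$\exists\,\fp$'' to a linear prime-ideal sum double-counts in a way you cannot absorb into the error term, since the constant $c$ in the main term depends on it. This is the same defect the paper observes in Coleman's treatment, so appealing to genericity recreates rather than repairs it.

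The paper's fix is a clean combinatorial identity (Lemma~\ref{lemma:principle-inclusion-exclusion}): for a split prime $p$ with $m$ primes above it satisfying the class-and-sector conditions,
\[
\one_{m>0}=\frac1n\sum_{k=1}^n(-1)^{k-1}\sum_{\sigma_1<\cdots<\sigma_k}\;\sum_{\fp\mid p}\one\bigl(\sigma_1(\fp),\dots,\sigma_k(\fp)\text{ all satisfy the conditions}\bigr),
\]
which follows from $\sum_k(-1)^{k-1}\binom{m}{k}=\one_{m>0}$. Each inner term is now a genuine prime-ideal sum with a conjunction of $k$ sector conditions; these cut out a union $\blacktriangle$ of polytopes in $\TT^{n-1}$ (possibly empty), and the polytope volume, not a single cube volume $\wh\psi(\mathbf 0)$, feeds into $c$. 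A second, smaller, issue: you use one smooth bump $\psi$ for the sector, but to prove an \emph{asymptotic} (rather than an upper bound) you need a matching minorant and majorant; the paper tiles each polytope by cubes and uses Selberg's extremal functions $f_M^\pm$, which gives two-sided control with explicit Fourier-coefficient bounds. Your other ingredients --- Mellin inversion of a smoothed interval indicator, Coleman's zero-free region, and the zero-density estimate averaging over $\mathbf m$ --- match the paper's Sections~\ref{section:smoothing-functions}--\ref{section:estimating-sum-over-zeros} in substance, though the paper's density estimate carries a $(\log T)^B$ factor and is not log-free; log-freeness is not needed once the zero-free region supplies an $\exp(-c_1(\log x)^{c_2})$ saving.
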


The second main theorem shows that the primes counted in Theorem \ref{theorem:galois-primes} are well-distributed in the sense of Bombieri-Vinogradov. The classical Bombieri-Vinogradov Theorem \cite{davenport} states that for any $0<\theta<\frac12$ and $A > 0$, we have $$\sum_{q \leq Q} \max_{(a,q) = 1} \Bigg|\sum_{\substack{p\leq x \\ p\equiv a\bmod{q}}} \log p - \frac{x}{\phi(q)}\Bigg|\ll_A \frac{x}{(\log x)^A},\quad Q = x^{\theta}.$$ A short interval generalization of the Bombieri-Vinogradov Theorem has been established by Jutila \cite{jutila} using ideas by Huxley and Jutila \cite{huxley-jutila}.

For imaginary quadratic extensions $K=\QQ(\sqrt{-m})$, Coleman and Swallow \cite{coleman-swallow} have proven a Bombieri-Vinogradov theorem for prime ideals with norm in short intervals satisfying the conditions in Theorem \ref{theorem:galois-primes}. We prove an analogous result for rational primes represented by norms of such ideals that in addition holds for any Galois extension $K/\mathbb{Q}$.

\begin{theorem} \label{theorem:bombieri-vinogradov-galois-primes}
Let $K/\QQ$ be a finite Galois extension of degree $n$. Let $\delta,\delta',\theta>0$ be constants with $2\theta + \max\bigp{\delta,\delta'} < \frac{2}{5n}$, and let $\vec{\phi}_0 \in \TT^{n-1}$ be fixed. Denote by $\cI$ the narrow ideal class group mod $(1)$, and let $I\in \cI$ be a fixed narrow ideal class mod $(1)$. Define
\[
\mathcal{P}_{I,\delta}(\vec{\phi}_0)=\bigc{p \text{ prime}\colon \text{$\exists \mathfrak{p}\in I$ with $N\mathfrak{p}=p$ and $\norm{\vec{\phi}(\mathfrak{p})-\vec{\phi}_0}<p^{-\delta}$}}.
\]
Then there is a constant $c$ such that for $A>0$ we have $$\sum_{\substack{q \leq Q \\ H_K^+\cap \QQ(\zeta_q) = \QQ }}\max_{\gcd(a, q) = 1}\Bigg|\sum_{\substack{ x\le p < x+h \\ p \equiv a\bmod q\\ p\in\mathcal{P}_{I,\delta}(\vec{\phi}_0)}} \log p - \frac{chx^{-(n-1)\delta}}{\vhi(q) \abs{\cI}}\Bigg| \ll_{A} \frac{hx^{-(n-1)\delta}}{(\log x)^A}, \quad Q = x^{\theta}, \quad h = x^{1-\delta'}$$ where $H_K^+$ denotes the narrow Hilbert class field of $K$.
\end{theorem}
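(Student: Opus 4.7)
The plan is to adapt the proof of the short-interval Bombieri-Vinogradov theorem of Jutila \cite{jutila} to the Hecke character setting, closely following the imaginary quadratic argument of Coleman-Swallow \cite{coleman-swallow}. The first step is a character expansion of the three defining conditions. The progression condition $p\equiv a\bmod q$ is detected via $\one_{p\equiv a}=\vhi(q)^{-1}\sum_{\chi\bmod q}\ol{\chi}(a)\chi(N\fp)$; the narrow class condition $\fp\in I$ via orthogonality of narrow class characters $\mu$ on $\cI$; and the sector condition $\norm{\vec{\phi}(\fp)-\vec{\phi}_0}<p^{-\delta}$ via a smooth Fourier expansion on $\TT^{n-1}$, concentrated on $O(x^{(n-1)\delta}(\log x)^B)$ frequencies $\mathbf{m}\in\ZZ^{n-1}$. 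Since $\chi\circ N$ is itself a Hecke character on $K$ with conductor dividing $q\cO_K$, each term $\mu\lam^{\mathbf{m}}(\fp)\chi(N\fp)$ is a single Hecke character $\psi$ on $K$. The disjointness hypothesis $H_K^+\cap\QQ(\zeta_q)=\QQ$ guarantees that $\psi$ is trivial exactly when $\chi$, $\mu$, and $\mathbf{m}$ are all trivial, which cleanly isolates the predicted main term $chx^{-(n-1)\delta}/(\vhi(q)\abs{\cI})$.

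The technical heart of the proof is a bound on the averaged character sum
\[
\sum_{q\le Q}\ \max_{(a,q)=1}\biggl|\sum_{\substack{\fp\text{ prime of }K\\ x\le N\fp<x+h\\ \fp\in I,\ \norm{\vec{\phi}(\fp)-\vec{\phi}_0}<p^{-\delta}}}\psi(\fp)\log N\fp\biggr|
\]
where $\psi$ ranges over the nonprincipal Hecke characters produced above. I would apply the Heath-Brown identity on $\cO_K$ to rewrite the von Mangoldt function as $O((\log x)^{O(1)})$ many Type I (linear) and Type II (bilinear) sums over integral ideals. The Type II pieces are handled by a hybrid large sieve for Hecke characters on $K$ with moduli $q\cO_K$ ranging over $q\le Q$, together with Jutila's short-interval device: a truncated Perron formula shifted to a horizontal contour reduces the short-interval problem to a mean-fourth-power estimate of the relevant Hecke $L$-functions on the critical line. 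The Type I pieces are handled by direct contour shifting using the standard zero-free region for Hecke $L$-functions of $K$, combined with partial summation to access the short interval.

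Small moduli $q\le(\log x)^{B(A)}$ are handled by applying Theorem \ref{theorem:galois-primes} in the arithmetic progression sense directly; the proof of that theorem yields a Siegel-Walfisz style error saving any power of $\log x$, and the sum over $q$ is trivial. For the complementary range $(\log x)^{B(A)}<q\le Q$, the character-sum bound above is combined with an average over $\mathbf{m}$ using Parseval on $\TT^{n-1}$ to recover the $x^{-(n-1)\delta}$ factor in the stated bound. The constraint $2\theta+\max(\delta,\delta')<\tfrac{2}{5n}$ is forced by exponent balance at this step: we pay $Q^2=x^{2\theta}$ from the large sieve and $x^{\max(\delta,\delta')}$ from the combined sector and short-interval parameters, and these losses must fit inside the saving implicit in Theorem \ref{theorem:galois-primes}, which is precisely $\tfrac{2}{5n}$.

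The principal obstacle is proving the hybrid large-sieve-plus-short-interval estimate for Hecke character sums with the full uniformity in $q$, $\chi$, $\mu$, and $\mathbf{m}$ required for these exponent balances. The disjointness hypothesis $H_K^+\cap\QQ(\zeta_q)=\QQ$ is crucial here: it ensures that as $q$ varies and $\chi$ ranges over characters mod $q$, the resulting Hecke characters on $K$ are all distinct and do not collapse onto narrow class characters of $K$, so the large sieve applies without duplication. The remaining technical issues---ramification in $K$, bounding discriminants of the Hecke $L$-functions, and uniform control of the approximate functional equation on short intervals---follow the blueprint of Coleman-Swallow with largely cosmetic changes needed to pass from an imaginary quadratic $K$ to a general Galois $K$.
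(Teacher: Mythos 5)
Your proposal contains a genuine gap that is, in fact, the central point of the paper: the averaged character sum you write down,
\[
\sum_{q\le Q}\ \max_{(a,q)=1}\Bigg|\sum_{\substack{\fp\text{ prime of }K\\ x\le N\fp<x+h\\ \fp\in I,\ \norm{\vec{\phi}(\fp)-\vec{\phi}_0}<p^{-\delta}}}\psi(\fp)\log N\fp\Bigg|,
\]
runs over \emph{prime ideals} $\fp$ of $K$, whereas the theorem counts \emph{rational primes} $p$ for which at least one prime ideal above $p$ satisfies the conditions. When $p$ splits completely in a Galois extension of degree $n$, these are not the same thing: $p$ may lie below $m$ prime ideals satisfying the sector and class conditions for any $0\le m\le n$, and $m$ is generally not constant as $p$ varies. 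The ideal-count weights $p$ by $m$, while the theorem wants $\one_{m>0}$. Your concluding remark that the passage from imaginary quadratic $K$ to general Galois $K$ requires only ``largely cosmetic changes'' following Coleman--Swallow is precisely the claim the paper flags as unjustified in Coleman's own work. In the imaginary quadratic case the correspondence is manageable (at most two conjugate primes above each split $p$, with $\arg\lambda(\fp)=-\arg\lambda(\bar{\fp})$), but in general Galois extensions this breaks down.

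The paper repairs this with an inclusion--exclusion identity (Lemma~\ref{lemma:principle-inclusion-exclusion}) that writes $\one_{m>0}$ as $\frac{1}{n}\sum_{k}(-1)^{k-1}\binom{m}{k}\cdot n$, i.e.\ as an alternating sum over $k$-tuples $\sigma_1<\cdots<\sigma_k$ of Galois automorphisms, each tuple imposing a simultaneous sector condition. After pulling the Galois action across by class field theory (each $\lambda_i\circ\sigma_j$ is again an infinite-order Hecke character $\lambda^{\mathbf{m}^{(ij)}}$, and $\sigma_j(\fa)\in I\iff\fa\in\sigma_j^{-1}(I)$), each tuple yields either an empty condition or a sector $\vartriangle$ in $\TT^{n-1}$ cut out by a fixed polytope $\vartriangle_0$ scaled by $x^{-\delta}$, together with a narrow ideal class $I'$. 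Only then does one arrive at the prime-ideal counting problem you propose to attack. This reduction is the main new ingredient; without it, one only obtains a Bombieri--Vinogradov inequality for prime ideals of $K$ (which the paper explicitly notes as the weaker statement recoverable for arbitrary, non-Galois $K$), not the stated theorem.

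Beyond this, your technical route also diverges from the paper's. You propose Heath-Brown's identity, a hybrid large sieve for Hecke characters, and Jutila's truncated-Perron-plus-fourth-moment device; the paper instead proves a smoothed explicit formula in terms of zeros of $L(s,\chi\eta\lambda^{\mathbf{m}})$, splits the sum over primitive conductors, and applies Duke's zero-density estimate (Lemma~\ref{lemma:duke-zero-density-estimate}), Coleman's zero-free region (Theorem~\ref{theorem:coleman-zero-free-region}), and Siegel-type exceptional-zero control (Lemmas~\ref{lemma:siegel-theorem-hecke}--\ref{lemma:siegel-zero-sparsity-of-modulus}). Both are standard machines for Bombieri--Vinogradov and either could in principle handle the prime-ideal sum, but they are not the same argument, and a large-sieve proof would need its own verification of the exponent balance $2\theta+\max(\delta,\delta')<\tfrac{2}{5n}$ rather than inheriting it from Theorem~\ref{theorem:galois-primes}.
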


Using partial summation, both theorems can be modified to count the number of primes without a $\log p$ weighting satisfying the given Hecke character conditions. Although Theorem \ref{theorem:bombieri-vinogradov-galois-primes} holds only for Galois extensions $K/\QQ$, we remark that the proof of Theorem \ref{theorem:bombieri-vinogradov-galois-primes} starting from Section \ref{section:refined-explicit-formula} yields a Bombieri-Vinogradov inequality for all number fields $K$, not necessarily Galois over $\QQ$, at the cost of counting prime ideals that satisfy the given Hecke character conditions rather than rational primes. This extends the localized Bombieri-Vinogradov theorem for prime ideals in imaginary quadratic extensions proven by Coleman and Swallow in \cite{coleman-swallow}.

\subsection*{Acknowledgements}

This research was funded by the NSF (DMS-2002265), the NSA (H98230-20-1-0012), the Templeton World Charity Foundation, and the Thomas Jefferson Fund at the University of Virginia. The authors thank Ken Ono, Jesse Thorner and Wei-Lun Tsai for advising this research and for many helpful conversations, as well as Fabian Gundlach for pointing out a minor error. Finally, the authors are grateful to the anonymous reviewer for many suggestions that improved the presentation of this article.

\section*{Notation}
\begin{itemize}
    \item $p$ denotes a rational prime.
    \item $\fa$ denotes an ideal of $\cO_K$.
    \item $\fp$ denotes a prime ideal of $\cO_K$.
    \item $\cI$ denotes the narrow ideal class group of $K$.
    \item $H_K^+$ denotes the narrow Hilbert class field of $K$.
    \item $\Delta_K$ denotes the discriminant of $K$.
    \item $I\in \cI$ denotes a narrow ideal class.
    \item $\lambda^{\mathbf{m}}$ denotes an infinite-order Hecke character when $\mathbf{m}\neq \mathbf{0}$.
    \item $\mu$ denotes an arbitrary finite-order Hecke character.
    \item $\eta$ denotes a narrow ideal class character with conductor $(1)$.
    \item $\chi$ denotes a Dirichlet character.
    \item $\one$ denotes the indicator function.
    \item A sum over the variable $\fa$ denotes a sum over all ideals $\fa\subeq \cO_K$.
    \item A sum over the variable $\fp$ denotes a sum over all prime ideals $\fp\subeq \cO_K$.
    \item $\sum^*$ denotes a sum over only primitive characters.
    \item $\sum_q'$ denotes a sum over positive integers $q$ such that $H_K^+\cap \QQ(\zeta_q) = \QQ$.
    \item $\norm{\cdot}$ denotes the sup-norm.
    \item The implied constants in the notation $O(\cdot)$, $\ll$, and $\gg$ depend on the number field $K$.
\end{itemize}

\section{Proof of Theorem \ref{theorem:galois-primes}} \label{section:proof-galois-primes}

In this section, we prove that
\begin{equation} \label{equation:x-delta}
    \sum_{\substack{x\le p < x+h \\ \exists \fp \in I \text{ s.t.} \\ N\fp = p \\ \norm{\vec{\phi}(\fp) - \vec{\phi}_0}< x^{-\delta}}} \log p  = \frac{cx^{-(n-1)\delta} h}{\abs{\cI}} \bigp{1 + O\bigp{e^{-c_1 (\log x)^{c_2}}}}.
\end{equation}
for some constants $c$ and $c_1>0$ depending on $\delta$ and $\delta'$, and any constant $0<c_2<\frac{1}{3}$. Once we prove this result, we will show that we can replace the condition $\norm{\vec{\phi}(\mathfrak{p})-\vec{\phi}_0}<x^{-\delta}$ by $\norm{\vec{\phi}(\mathfrak{p})-\vec{\phi}_0}<p^{-\delta}$, thus proving Theorem \ref{theorem:galois-primes}.

To show \eqref{equation:x-delta}, we adapt the principle of inclusion-exclusion to express the count of rational primes in terms of counts of certain prime ideals associated with a set of automorphisms $\sigma_1,\ldots,\sigma_k\in \Gal(K/\QQ)$. We then show an explicit formula for the count of prime ideals and use a zero-density estimate to bound the sum over zeros of Hecke $L$-functions. Fix $d=n-1$ to be the dimension of the space of infinite-order Hecke characters, and write $\vec{\phi}_0 = \bigp{\phi_{01},\ldots,\phi_{0d}}$.

\subsection{Principle of inclusion-exclusion} \label{section:principle-inclusion-exclusion} 
We introduce a key lemma motivated by the principle of inclusion-exclusion that lets us reduce the problem of counting rational primes that satisfy the given conditions above to counting prime ideals.

\begin{lemma} \label{lemma:principle-inclusion-exclusion}
Suppose that $p$ does not ramify in $K$. Fix an arbitrary ordering on the $n$ elements of $\Gal(K/\QQ)$. Then we have $$\one\bigp{\substack{\exists \fp \in I \text{ s.t.} \\ N\fp = p \\ \norm{\vec{\phi}(\fp) - \vec{\phi}_0} < x^{-\delta}}} = \frac{1}{n} \sum_{k=1}^n (-1)^{k-1} \sum_{\substack{\sigma_1,\ldots, \sigma_k \in \Gal(K/\QQ) \\ \sigma_1<\cdots < \sigma_k}} ~ \sum_{\fp \text{ over }  p} ~ \one\bigp{\substack{\forall 1\le j\le k \\ N(\sigma_j(\fp)) = p \\ \sigma_j(\fp)\in I  \\ \norm{\vec{\phi}(\sigma_j(\fp)) - \vec{\phi}_0} < x^{-\delta}}},$$ where $\one(\cdot)$ denotes the indicator function of an event.
\end{lemma}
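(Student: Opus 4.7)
The plan is to prove the identity by reducing to the classical inclusion-exclusion formula after exploiting the transitive Galois action on primes above $p$. First I would split into cases according to the residue degree $f$ of primes above $p$, using that since $p$ is unramified in $K$, all primes $\fp$ over $p$ share a common residue degree and $\Gal(K/\QQ)$ acts transitively on them. In the case $f > 1$, we have $N\fp = p^f \neq p$ for every $\fp$ over $p$, so the left-hand side vanishes; the right-hand side likewise vanishes because the condition $N(\sigma_j(\fp)) = N(\fp) = p^f \neq p$ inside every indicator is violated.

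The remaining case is $f = 1$. Here there are exactly $n$ distinct primes $\fp_1, \ldots, \fp_n$ lying above $p$, and $\Gal(K/\QQ)$ acts simply transitively on them. Letting $E_i$ denote the event that $\fp_i \in I$ and $\norm{\vec{\phi}(\fp_i) - \vec{\phi}_0} < x^{-\delta}$, the left-hand side is $\one(E_1 \cup \cdots \cup E_n)$, which by the classical inclusion-exclusion formula equals
\[
\sum_{k=1}^n (-1)^{k-1} \sum_{1 \le i_1 < \cdots < i_k \le n} \one(E_{i_1} \cap \cdots \cap E_{i_k}).
\]

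The key step will be a reindexing via the Galois action. For each fixed prime $\fp$ above $p$, the map $\sigma \mapsto \sigma(\fp)$ is a bijection $\Gal(K/\QQ) \to \{\fp_1, \ldots, \fp_n\}$, which allows one to transport the fixed ordering of $\Gal(K/\QQ)$ to the set of primes above $p$ and thereby identify increasing $k$-tuples $\sigma_1 < \cdots < \sigma_k$ in $\Gal(K/\QQ)$ with $k$-element subsets of primes above $p$. Under this identification, the inclusion-exclusion expansion becomes
\[
\one\bigl(E_1 \cup \cdots \cup E_n\bigr) = \sum_{k=1}^n (-1)^{k-1} \sum_{\sigma_1 < \cdots < \sigma_k} \one\bigl(\forall j,\ \sigma_j(\fp) \in I \text{ and } \norm{\vec{\phi}(\sigma_j(\fp)) - \vec{\phi}_0} < x^{-\delta}\bigr).
\]
Since the left-hand side of this equation does not depend on the choice of base prime $\fp$, averaging over the $n$ primes above $p$ multiplies the right-hand side by $n$, canceling the prefactor $1/n$ and completing the identity. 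The only subtlety is the bookkeeping surrounding the ordering in the reindexing step; there is no substantive obstacle to the proof.
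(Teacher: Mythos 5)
Your proposal is correct and follows essentially the same approach as the paper: split on whether $p$ splits completely, observe the simply transitive Galois action on primes above $p$, and use the bijection $\sigma \mapsto \sigma(\fp)$ for each fixed $\fp$ to match increasing $k$-tuples in $\Gal(K/\QQ)$ with $k$-element sets of primes over $p$, then average over the $n$ choices of $\fp$. The only cosmetic difference is that the paper packages the inclusion-exclusion step as the combinatorial identity $\sum_{k=1}^n (-1)^{k-1}\binom{m}{k}=\one_{m>0}$ (where $m$ is the number of primes over $p$ satisfying the conditions), while you invoke the event-based inclusion-exclusion formula and then reindex.
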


\begin{proof}
Since $K/\QQ$ is Galois, all primes lying over $p$ have the same inertia degree. If $p$ does not split completely in $K$, then all primes $\fp$ lying over $p$ have $N\fp\neq p$ and $N(\sigma_j(\fp))\neq p$ because the inertia degrees are greater than 1. Assume now that $p$ splits completely. Suppose that $m\ge 0$ primes $\fp$ lying over $p$ satisfy $\fp\in I, N\fp = p$, and $\norm{\vec{\phi}(\fp) - \vec{\phi}_0} < x^{-\delta}$. Then the left hand side is $\one_{m>0}$ by definition of $m$. To compute the right hand side, fix $1\le k\le n$. For each prime $\fp$ lying over $p$, there are $\binom{m}{k}$ tuples of $k$ automorphisms $\sigma_1<\cdots <\sigma_k$ such that for all $1\le j\le k$, we have $\sigma_j(\fp)\in I, N(\sigma_j(\fp)) = p$, and $\norm{\vec{\phi}(\sigma_j(\fp)) - \vec{\phi}_0} < x^{-\delta}$. We use the convention $\binom{m}{k} = 0$ for $m<k$. Since there are $n$ primes lying over $p$, we conclude that the right hand side is $$\frac{1}{n} \sum_{k=1}^n (-1)^{k-1} n\binom{m}{k} = \one_{m>0}$$ as desired.
\end{proof}

By Lemma \ref{lemma:principle-inclusion-exclusion}, it suffices to estimate
\begin{align*}
    \sum_{\substack{ x\le p < x+h \\ \exists \fp \in I \text{ s.t.} \\ N\fp = p \\ \norm{\vec{\phi}(\fp) - \vec{\phi}_0}< x^{-\delta}}} \log p
    &= \frac{1}{n}\sum_{ \sigma_1<\cdots<\sigma_k} (-1)^{k-1} \sum_{\substack{x\le N\fp < x+h \\ \forall 1\le j\le k \\ \sigma_j(\fp)\in I \\  N(\sigma_j(\fp)) = p \\ \norm{\vec{\phi}(\sigma_j(\fp)) - \vec{\phi}_0} < x^{-\delta}}} \log (N\fp).
\end{align*}
Since the number of $k$-tuples of automorphisms $\sigma_1<\cdots <\sigma_k$ is at most $2^n$, which is constant given the number field $K$, it suffices to produce an asymptotic, in the form of Theorem \ref{theorem:galois-primes}, for $$\sum_{\substack{x\le N\fp < x+h \\\forall 1\le j\le k \\ \sigma_j(\fp)\in I \\  N(\sigma_j(\fp)) = p \\ \norm{\vec{\phi}(\sigma_j(\fp)) - \vec{\phi}_0} < x^{-\delta}}} \log (N\fp)$$ given a fixed set of automorphisms $\sigma_1<\cdots <\sigma_k$.

Define $\Lam(\fa)$ to be the generalized von Mangoldt function $$\Lam(\fa) = \begin{cases} \log (N\fp) & \fa = \fp^k\text{ for some prime }\fp \\ 0 & \text{otherwise}.\end{cases}$$ Because there are $\ll x^{\frac12}$ ideals $\fa$ satisfying $N\fa = p^k$ with $k\ge 2$ in the interval $x\le N\fa < x+h$, we have
\begin{equation}\label{equation:von-mangoldt}
    \sum_{\substack{x\le N\fp < x+h \\ \forall 1\le j\le k \\ \sigma_j(\fp)\in I \\  N(\sigma_j(\fp)) = p \\ \norm{\vec{\phi}(\sigma_j(\fp)) - \vec{\phi}_0} < x^{-\delta}}} \log (N\fp) = O\bigp{x^{\frac{1}{2}}\log x} + \sum_{\substack{x\le N\fa < x+h \\ \forall 1\le j\le k \\ \sigma_j(\fa)\in I  \\ \norm{\vec{\phi}(\sigma_j(\fa)) - \vec{\phi}_0} < x^{-\delta}}} \Lam(\fa).
\end{equation}
The error term is negligible to the main term in Theorem \ref{theorem:galois-primes} for $0<\delta, \delta' < \frac{2}{5n}$ and hence can be ignored.

One important observation is that the condition $\sigma_j(\fa)\in I$ is equivalent to the condition $\fa\in \sigma_j^{-1}(I)$. Here we use the fact that if $\fa\succ 0$ then $\sigma(\fa) \succ 0$ for all $\sigma\in \Gal(K/\QQ)$, which implies that the Galois action is well-defined on the narrow ideal class group. The conditions $\fa\in \sigma_j^{-1}(I)$ for $1\le j\le k$ can be simultaneously satisfied if and only if the narrow ideal class $\sigma_j^{-1}(I)$ is constant for $1\le j\le k$. In the case that $\sigma_j^{-1}(I)$ is constant, we denote the narrow ideal class by $I'$. Hence we have
\begin{equation}\label{equation:von-mangoldt-ideal-class}
    \sum_{\substack{x\le N\fa < x+h \\ \forall 1\le j\le k \\ \sigma_j(\fa)\in I  \\ \norm{\vec{\phi}(\sigma_j(\fa)) - \vec{\phi}_0} < x^{-\delta}}} \Lam(\fa) =
    \begin{cases}
        \displaystyle\sum_{\substack{\fa\in I'\\ x\le N\fa < x+h \\ \forall 1\le j\le k \\ \norm{\vec{\phi}(\sigma_j(\fa)) - \vec{\phi}_0} < x^{-\delta}}} \Lam(\fa) & \quad\text{if }\sigma_j^{-1}(I) = I',\quad \forall 1\le j\le k \\ \\
        0 & \quad\text{otherwise}.
    \end{cases}
\end{equation}
We restrict to the non-trivial former case for the rest of the proof of Theorem \ref{theorem:galois-primes}. Another important observation from class field theory is that for each $j$ and each infinite-order Hecke character $\lam_i$, 
$$\lam_i \circ \sigma_j:\fa\mapsto \lam_i(\sigma_j(\fa))$$
is also an infinite-order Hecke character and hence can be written as $\lam^{\mathbf{m}^{(ij)}}$ for some 
$$\mathbf{m}^{(ij)} = \left(m^{(ij)}_1,\ldots,m^{(ij)}_d\right)\in \ZZ^d.$$ Recalling the definition of $\vec{\phi}(\fa)$ as $\lam_j(\fa) = e^{2\pi i\phi_j(\fa)}$, we see that the condition $\norm{\vec{\phi}(\sigma_j(\fa)) - \vec{\phi}_0} < x^{-\delta}$ is equivalent to $\vec{\phi}(\fa)$ lying inside a polytope in $\TT^d$ cut out by the hyperplanes $$\abs{\phi_{0i}-\arg \prod_{l=1}^d \lam_l^{m^{(ij)}_l}(\fa)} = \abs{\phi_{0i}-\sum_{l=1}^d m^{(ij)}_l\phi_l(\fa)} < x^{-\delta}$$ for $1\le i\le d$. The intersection of such polytopes across all $1\le j\le k$ is a union $\blacktriangle$ of polytopes $\vartriangle$. If $\blacktriangle$ is empty, then the sum in \eqref{equation:von-mangoldt-ideal-class} equals 0 and hence we are done. We restrict to the case in which $\blacktriangle$ is non-empty for the rest of the proof, and we can consider each individual polytope $\vartriangle \subeq \blacktriangle$ independently. For sufficiently large $x$, the individual polytopes $\vartriangle \subeq \blacktriangle$ corresponding to different values of $x$ are all homothetic. In particular, $\vartriangle$ is a dilation by $x^{-\delta}$ of a fixed polytope $\vartriangle_0\subeq \TT^d$ depending only on the choice of automorphisms $\sigma_1<\cdots<\sigma_k$.

Rather than estimate the von Mangoldt sum
\begin{equation} \label{equation:von-mangoldt-sum}
    \sum_{\substack{x\le N\fa < x+h \\ \forall 1\le j\le k \\ \sigma_j(\fa)\in I \\  \norm{\vec{\phi}(\sigma_j(\fa)) - \vec{\phi}_0} < x^{-\delta}}} \Lam(\fa) = \sum_{\fa} \one_{I'}(\fa) \one_{[x, x+h)}(N\fa) \one_{\vartriangle}(\vec{\phi}(\fa)) \Lam(\fa)
\end{equation}
in \eqref{equation:von-mangoldt}, it is more convenient to estimate a smoothed von Mangoldt sum, replacing indicator functions for $x\le N\fa < x+h$ and $\vec{\phi}(\fa)\in \vartriangle$ with smooth counterparts. It is a standard technique to compute a smooth minorant and a smooth majorant and prove that both satisfy the asymptotic on the right hand side of \eqref{equation:x-delta}.

\subsection{Smoothing functions} \label{section:smoothing-functions}

For a short interval $[x, x+h)$ with $h = x^{1-\delta'}$, define $$g_u^-(y) \le \one_{[x, x+h)}(y) \le g_u^+(y),$$ for $u\ll h$ to be chosen later, to be smooth functions supported on $[x-u,x+h+u]$ such that $g_u^-(y) = 1$ for $x+u\le y \le x+h-u$ and $g_u^+(y) = 1$ for $x\le y\le x+h$. In the intervals $[x, x+u]\cup [x+h-u, x+h]$ for $g_u^-(y)$ and $[x-u, x]\cup [x+h, x+h+u]$ for $g_u^+(y)$, we set $g_u^\pm(y)$ to be a translation and horizontal dilation by $u$ of a fixed smooth transition function $g_0(y)$ from 0 to 1 and its reflection from 1 to 0 respectively. More precisely, define $$g_0(y) = \frac{e^{-\frac{1}{y}}}{e^{-\frac{1}{y}} + e^{-\frac{1}{1 - y}}}$$ for $y\in (0, 1)$ and define
\begin{align*}
    g_u^-(y) &=
    \begin{cases} 
        g_0\bigp{\frac{y - x}{u}} &  \textif y\in (x, x+u) \\
        1 & \textif y\in [x + u, x+h-u] \\
        g_0\bigp{\frac{x + h - y}{u}} &  \textif y\in (x+h-u, x+h) \\
        0 & \quad\text{otherwise}
    \end{cases}
\end{align*}
\begin{align*}
    g_u^+(y) &=
    \begin{cases} 
        g_0\bigp{\frac{y - x + u}{u}} &  \textif y\in (x - u, x) \\
        1 & \textif y\in [x, x+h] \\
        g_0\bigp{\frac{x + h + u - y}{u}} &  \textif y\in (x+h, x+h+u) \\
        0 & \quad\text{otherwise}.
    \end{cases}
\end{align*}
We remark that the $\ell$-th derivative of $g_u^\pm(y)$ satisfies $\abs{{g_u^\pm}^{(\ell)}(y)} \ll_\ell u^{-\ell}$ by the chain rule because the functions ${g_u^\pm}^{(\ell)}(y)$ for $\ell>0$ have compact support. The Mellin transform of $g_u^\pm(y)$ is given by the entire function $$G_u^\pm(s) = \int_{0}^\infty g_u^\pm(y) y^{s-1}\,dy$$ of the complex variable $s = \sigma + it$. For any $\ell\in \ZZ_{\ge 0}$ and $s = \sigma + it$ with $\sigma \le 2$, we have the following estimate on $G_u^\pm(s)$:

\begin{equation} \label{equation:mellin-transform-decay}
     \abs{G_u^\pm(s)} \ll_\ell u^{-\ell} (1+\abs{s})^{-\ell} hx^{\sigma+\ell-1}.
\end{equation}

By the Mellin inversion formula, we have $$g_u^\pm(N\fa) = \frac{1}{2\pi i} \int_{2-i\infty}^{2+i\infty} G_u^\pm(s)(N\fa)^{-s}\,ds,$$ where the integral is absolutely convergent because of the rapid decay property of $G_u^\pm(s)$ on $\sigma= 2$.

As discussed previously, we will show that both a smooth minorant and a smooth majorant for \eqref{equation:von-mangoldt-sum} are of the correct asymptotic. For a minorant and majorant of $\one_{\vartriangle}(\vec{\phi}(\fa))$, we use a $d$-dimensional version of the Selberg minorizing and majorizing function for boxes \cite{cochrane}:

\begin{theorem} \label{theorem:trigonometric-approximation}
For any positive integer $M$ and any cube $$B = \prod_{j=1}^d [a_j, b_j]\subeq \TT^d,\quad b_j - a_j = \kappa$$ with $\vol(B)<1$, there exist trigonometric polynomials $$f_M^\pm(\vec{\phi}) = \sum_{\substack{\norm{\mathbf{m}}\le M}} \wh{f_M^\pm}(\mathbf{m}) e^{2\pi i\biga{\mathbf{m}, \vec{\phi}}}$$ such that:
\begin{itemize}
    \item $f_M^-(\vec{\phi})\le \chi_{B}(\vec{\phi})\le f_M^+(\vec{\phi})$ for $\vec{\phi}\in \TT^d$.
    \item $\vol(B) - \wh{f_M^-}(\mathbf{0}) = \bigp{\kappa + \frac{2}{M + 1}}^d - \bigp{\kappa + \frac{1}{M + 1}}^d$
    \item $\wh{f_M^+}(\mathbf{0}) - \vol(B) = \bigp{\kappa + \frac{1}{M + 1}}^d - \kappa^d$
\end{itemize} 
\end{theorem}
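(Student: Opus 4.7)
The plan is to reduce everything to the classical one-dimensional Beurling--Selberg construction. Recall that for any interval $I=[a,b]\subset\TT$ of length $\kappa<1$, there exist real trigonometric polynomials $s_M^\pm$ of degree at most $M$ with $s_M^+\ge 0$, satisfying $s_M^-(\phi)\le\chi_I(\phi)\le s_M^+(\phi)$ pointwise and $\wh{s_M^+}(0)=\kappa+\tfrac{1}{M+1}$, $\wh{s_M^-}(0)=\kappa-\tfrac{1}{M+1}$; see, for example, Montgomery's \emph{Ten Lectures}. Write $s_{M,j}^\pm$ for the polynomials attached to $[a_j,b_j]$.

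For the majorant I would simply take the product $f_M^+(\vec\phi):=\prod_{j=1}^d s_{M,j}^+(\phi_j)$. Non-negativity of each factor together with $s_{M,j}^+\ge\chi_{[a_j,b_j]}$ yields $f_M^+\ge\chi_B$ pointwise via a standard telescoping identity. Each factor has Fourier support in $|m_j|\le M$, so the support condition $\norm{\mathbf m}\le M$ on $\wh{f_M^+}(\mathbf m)$ holds, and $\wh{f_M^+}(\mathbf 0)=\prod_j\wh{s_{M,j}^+}(0)=\bigp{\kappa+\tfrac{1}{M+1}}^d$, giving the stated bound on $\wh{f_M^+}(\mathbf 0)-\vol(B)$.

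The minorant is the harder direction, since the naive product $\prod_j s_{M,j}^-$ need not be a pointwise minorant: the 1D minorants can take negative values outside their intervals, so an even number of negative factors produces a spurious positive contribution. I would instead start from the telescoping identity
\[
f_M^+(\vec\phi)-\chi_B(\vec\phi)=\sum_{k=1}^d\bigp{\prod_{j<k}s_{M,j}^+(\phi_j)}\bigp{s_{M,k}^+(\phi_k)-\chi_{[a_k,b_k]}(\phi_k)}\bigp{\prod_{j>k}\chi_{[a_j,b_j]}(\phi_j)},
\]
majorize each $\chi_{[a_j,b_j]}$-factor by an appropriate non-negative degree-$M$ trigonometric polynomial attached to the slightly dilated interval $[a_j-\tfrac{1}{M+1},b_j+\tfrac{1}{M+1}]$, and majorize each boundary-layer factor $s_{M,k}^+-\chi_{[a_k,b_k]}$ by a non-negative degree-$M$ trigonometric polynomial whose integral equals $\tfrac{2}{M+1}$. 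Subtracting the resulting majorant of $f_M^+-\chi_B$ from $f_M^+$ produces a degree-$M$ trigonometric polynomial $f_M^-$ with $f_M^-\le\chi_B$, as required.

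The main obstacle is to choose this construction so that $\wh{f_M^-}(\mathbf 0)$ matches \emph{exactly} the expression $\kappa^d-\bigp{\kappa+\tfrac{2}{M+1}}^d+\bigp{\kappa+\tfrac{1}{M+1}}^d$: this second-difference form signals that the correct bookkeeping must involve $B$ together with its two dilations by $\tfrac{1}{M+1}$ and $\tfrac{2}{M+1}$. The cleanest implementation is the product-of-dilations construction of Cochrane in \cite{cochrane}, which I would follow directly rather than rediscover. Once the zeroth Fourier coefficients of the resulting $f_M^\pm$ are in hand, the pointwise sandwich property and the degree bound are immediate from the construction.
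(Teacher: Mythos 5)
Your proposal ultimately rests on citing \cite[Theorem 1]{cochrane} for the minorant (and implicitly for the exact form of both constructions), which is precisely all the paper does: its proof is the one-line appeal ``This follows immediately from \cite[Theorem 1]{cochrane}.'' The extra exposition you give — the product-of-Selberg-majorants construction for $f_M^+$, the correct computation of $\widehat{f_M^+}(\mathbf{0})=\bigp{\kappa+\frac{1}{M+1}}^d$, and the accurate diagnosis of why the naive product of minorants fails — is sound and is in fact what Cochrane's proof does, so this is the same approach.
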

\begin{proof}
   This follows immediately from \cite[Theorem 1]{cochrane}.
\end{proof}

We prove a bound on the size of the Fourier coefficients of $f_M^\pm$:

\begin{lemma} \label{lemma:trigonometric-approximation-fourier-coeffient-bound}
For all $\mathbf{m} = (m_1,\ldots,m_d)\in \mathbb{Z}^d$, we have
$$\abs{\widehat{f_M^\pm}(\mathbf{m})} \ll_d \max\bigp{\frac{\kappa^{d-1}}{M} , M^{-d}} + \prod_{j=1}^d \min\bigp{\kappa, \frac{1}{\abs{m_j}}}.$$
\end{lemma}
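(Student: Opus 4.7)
The plan is to decompose $f_M^\pm = \chi_B + (f_M^\pm - \chi_B)$ and estimate the Fourier coefficients of each piece separately. The indicator $\chi_B$ contributes the product term $\prod_j \min(\kappa, 1/|m_j|)$ via the standard one-dimensional estimate, while the error $f_M^\pm - \chi_B$ contributes the $\max(\kappa^{d-1}/M, M^{-d})$ term via the trivial bound $|\wh{g}(\mathbf{m})| \le \norm{g}_{L^1(\TT^d)}$ together with Theorem \ref{theorem:trigonometric-approximation}.

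For the first piece, since $B$ is a product of intervals, $\wh{\chi_B}(\mathbf{m}) = \prod_{j=1}^d \wh{\chi_{[a_j,b_j]}}(m_j)$. Direct integration of $\int_{a_j}^{b_j} e^{-2\pi i m_j \phi}\,d\phi$ yields the length bound $|\wh{\chi_{[a_j,b_j]}}(m_j)| \le \kappa$ trivially, and the oscillation bound $|\wh{\chi_{[a_j,b_j]}}(m_j)| \le 1/(\pi|m_j|)$ for $m_j \neq 0$. Taking the minimum and multiplying across $j$ produces exactly the product term in the claimed bound.

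For the error piece, the pointwise sandwich $f_M^- \le \chi_B \le f_M^+$ implies $f_M^\pm - \chi_B$ has constant sign, so its $L^1$-norm equals the absolute value of its zeroth Fourier coefficient:
$$\norm{f_M^+ - \chi_B}_{L^1} = \wh{f_M^+}(\mathbf{0}) - \vol(B), \qquad \norm{\chi_B - f_M^-}_{L^1} = \vol(B) - \wh{f_M^-}(\mathbf{0}).$$
Theorem \ref{theorem:trigonometric-approximation} expresses each of these as a difference of the form $(\kappa + c_1/(M+1))^d - (\kappa + c_2/(M+1))^d$ with $c_1, c_2 \in \{0,1,2\}$. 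Expanding by the binomial theorem gives a sum $\sum_{j=1}^d O_d\bigp{\kappa^{d-j} M^{-j}}$. A case split on whether $\kappa \ge 1/M$ or $\kappa < 1/M$ finishes the estimate: in the former regime the $j=1$ summand dominates, giving $O_d(\kappa^{d-1}/M)$, while in the latter each summand is bounded by $M^{-d}$, giving $O_d(M^{-d})$. Hence the $L^1$-norm, and therefore $|\wh{(f_M^\pm - \chi_B)}(\mathbf{m})|$ via the trivial bound, is $\ll_d \max(\kappa^{d-1}/M, M^{-d})$. Combining with the bound on $|\wh{\chi_B}(\mathbf{m})|$ yields the lemma. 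The only step requiring attention is the binomial dichotomy; everything else is routine.
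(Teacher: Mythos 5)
Your proposal is correct and follows essentially the same approach as the paper's proof: decompose $f_M^\pm = \chi_B + (f_M^\pm - \chi_B)$, bound $\abs{\widehat{\chi_B}(\mathbf{m})}$ by direct one-dimensional computation, and bound $\abs{\widehat{(f_M^\pm - \chi_B)}(\mathbf{m})}$ by the $L^1$-norm, which Theorem \ref{theorem:trigonometric-approximation} controls. The explicit constant-sign observation and the binomial-expansion case split simply spell out details the paper leaves implicit.
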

\begin{proof}
We apply the inequality $\abs{\widehat{f}(\mathbf{m})}\le \norm{f}_{L^1}$ to the function $f(\vec{\phi}) = f_M^\pm(\vec{\phi}) - \chi_B(\vec{\phi})$. We compute
\[
    \abs{\widehat{f_M^\pm}(\mathbf{m}) - \widehat{\chi_B}(\mathbf{m})}
    \le \norm{f_M^\pm - \chi_B}_{L^1}\le \prod_{j=1}^d \bigp{\kappa + \frac{2}{M + 1}} - \kappa^d\ll_d \max\bigp{\frac{\kappa^{d-1}}{M} , M^{-d}}
\]
and
\[
    \abs{\widehat{\chi_B}(\mathbf{m})}= \prod_{j = 1}^d \abs{\frac{e^{2\pi i m_j a_j} - e^{2\pi i m_j b_j}}{2\pi i m_j}}= \prod_{j = 1}^d \abs{\frac{\sin(\pi m_j \kappa)}{\pi m_j}}\le \prod_{j=1}^d \min\bigp{\kappa, \frac{1}{\pi \abs{m_j}}}.
\]
The result follows by the triangle inequality.
\end{proof}

The key idea to smooth $\one_\vartriangle(\vec{\phi}(\fa))$ is to tile the polytope $\vartriangle$ using disjoint cubes. In particular, since $\vartriangle$ is a dilation by $x^{-\delta}$ of $\vartriangle_0$, we will set $\kappa = \kappa_0 x^{-\delta}$ for some $\kappa_0 > 0$ depending on $\vartriangle_0$ to be chosen later, where the asymptotic is with respect to $x\to \infty$. Consider an arbitrary tessellation of $\TT^d$ by a grid of disjoint cubes of side length $\kappa_0$. Such a tessellation can be translated, so without loss of generality fix $\vec{\phi}_0\in \TT^d$ as a vertex of a cube in the grid for all $\kappa_0$. As $\kappa_0\to 0$, the combined volume of cubes that intersect $\vartriangle_0$ will approach $\vol(\vartriangle_0)$.

\begin{lemma}\label{lemma:polytope-tiling}
For a grid of disjoint cubes of side length $\kappa_0$ and an arbitrary solid polytope $\vartriangle_0\subeq \TT^d$, denote by $\boxdot_{\kappa_0}^-$ the union of cubes that lie completely inside $\vartriangle_0$ and denote by $\boxdot_{\kappa_0}^+$ the union of cubes that have non-empty intersection with $\vartriangle_0$. Note that $\boxdot_{\kappa_0}^- \subeq \vartriangle_0 \subeq \boxdot_{\kappa_0}^+$ by definition. Then we have $$\abs{\vol(\boxdot_{\kappa_0}^\pm) - \vol(\vartriangle_0)} = O\bigp{ \kappa_0 \cdot \vol(\p \vartriangle_0)},$$ where $\vol(\p \vartriangle_0)$ denotes the surface area of $\vartriangle_0$.
\end{lemma}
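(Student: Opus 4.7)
The plan is to bound the error by comparing $\vol(\boxdot_{\kappa_0}^\pm)$ with $\vol(\vartriangle_0)$ via the symmetric difference of the inner and outer tilings, and then to cover this symmetric difference by a thin tubular neighborhood of $\partial\vartriangle_0$.

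First, since $\boxdot_{\kappa_0}^- \subeq \vartriangle_0 \subeq \boxdot_{\kappa_0}^+$, we have
\[
    \abs{\vol(\boxdot_{\kappa_0}^\pm) - \vol(\vartriangle_0)} \le \vol(\boxdot_{\kappa_0}^+ \setminus \boxdot_{\kappa_0}^-),
\]
so it suffices to bound the right-hand side. I would then observe that any cube $C$ of the grid that lies in $\boxdot_{\kappa_0}^+ \setminus \boxdot_{\kappa_0}^-$ must intersect both $\vartriangle_0$ and its complement, and hence must meet the boundary $\p\vartriangle_0$. Because $C$ has diameter $\sqrt d\,\kappa_0$, such a cube is contained in the tubular neighborhood
\[
    N_{\sqrt d\,\kappa_0}(\p\vartriangle_0) = \bigc{x\in\TT^d : \mathrm{dist}(x,\p\vartriangle_0)\le \sqrt d\,\kappa_0}.
\]
Therefore $\boxdot_{\kappa_0}^+\setminus \boxdot_{\kappa_0}^-$ is contained in this neighborhood, and it is enough to show $\vol(N_{\sqrt d\,\kappa_0}(\p\vartriangle_0))\ll_d \kappa_0 \cdot \vol(\p\vartriangle_0)$.

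To estimate the volume of the tubular neighborhood, I would decompose $\p\vartriangle_0$ into its finitely many $(d-1)$-dimensional faces $F_1,\ldots,F_r$. For each face $F_i$, pick coordinates so that $F_i$ lies in the hyperplane perpendicular to its normal; then the set of points in $\RR^d$ within distance $\sqrt d\,\kappa_0$ of $F_i$ is contained in a slab of thickness $2\sqrt d\,\kappa_0$ over a $\sqrt d\,\kappa_0$-thickening of $F_i$ in its own hyperplane, giving contribution $\ll_d \kappa_0 \bigp{\vol(F_i) + O_d(\kappa_0\cdot\vol(\p F_i))}$. Summing over $i$ and absorbing the lower-dimensional error term (which is negligible since the combinatorics of the polytope $\vartriangle_0$ is fixed) yields $\vol(N_{\sqrt d\,\kappa_0}(\p\vartriangle_0)) \ll_d \kappa_0 \cdot \vol(\p\vartriangle_0)$, as required.

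The only real obstacle is the slight overcounting caused by the $(d-2)$-dimensional intersections of adjacent faces (edges, ridges, etc.), but since $\vartriangle_0$ is a fixed polytope with a bounded number of faces of each dimension, these contributions are all $O_d(\kappa_0^2)$ and are dominated by the face contribution as $\kappa_0 \to 0$. No further structural input about $\vartriangle_0$ is needed beyond the fact that it is a polytope with finitely many flat faces.
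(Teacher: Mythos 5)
Your argument is correct and, in fact, more than the paper offers: the paper's own ``proof'' of Lemma~\ref{lemma:polytope-tiling} is a one-line assertion that this is a well-known geometric fact for polytopes, with no further justification. Your tubular-neighborhood decomposition supplies a genuine proof. The structure is sound: any grid cube appearing in $\boxdot_{\kappa_0}^+\setminus\boxdot_{\kappa_0}^-$ meets $\vartriangle_0$ but is not contained in it, so it meets $\p\vartriangle_0$; a cube of diameter $\sqrt{d}\,\kappa_0$ meeting $\p\vartriangle_0$ lies in the $\sqrt{d}\,\kappa_0$-neighborhood of $\p\vartriangle_0$; and decomposing $\p\vartriangle_0$ into its top-dimensional faces and bounding each slab gives the stated volume estimate. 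This is the standard argument one would expect behind the paper's citation.

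One small point worth making explicit: the constant in your final estimate is not purely a function of $d$. When you absorb the $(d-2)$- and lower-dimensional contributions coming from edges, ridges, and corners of $\vartriangle_0$, you are using that the number of such faces and their measures are controlled by $\vartriangle_0$ itself; so the constant depends on $\vartriangle_0$ (or, in the paper's setting, on the fixed choice of automorphisms $\sigma_1<\cdots<\sigma_k$ from which $\vartriangle_0$ is built). You gesture at this by saying the combinatorics of $\vartriangle_0$ is fixed, which is right, and it is consistent with how the paper later applies the lemma, writing the error as $O_{\vartriangle_0}(\kappa_0)$. But your $\ll_d$ notation in the displayed tubular-neighborhood bound slightly understates this dependence, and a careful reader would want the final estimate written as $\ll_{d,\vartriangle_0}\kappa_0\cdot\vol(\p\vartriangle_0)$, or with the lower-order terms kept visible as $O_{\vartriangle_0}(\kappa_0^2)$ and then absorbed because $\kappa_0\to 0$.
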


\begin{proof}
This is a well-known fact from geometry using the fact that $\vartriangle_0$ is a polytope.
\end{proof}

In Lemma \ref{lemma:polytope-tiling}, scale the polytope $\vartriangle_0$ and the grid by a factor of $x^{-\delta}$ so that $\vartriangle_0$ becomes $\vartriangle$. Denote the scaled $\boxdot_{\kappa_0}^\pm$ by $\boxdot_{\kappa}^\pm$, which consists of cubes of side length $\kappa$. Then we have
\begin{equation} \label{equation:cube-volume-error-estimate}
    \begin{split}
        \abs{\vol(\boxdot_{\kappa}^\pm) - \vol(\vartriangle)}
        &= \abs{(x^{-\delta})^d \bigp{\vol(\boxdot_{\kappa_0}^\pm) - \vol(\vartriangle_0) }}
        =   O\bigp{ (x^{-\delta})^d\cdot \kappa_0 \cdot \vol(\p \vartriangle_0)}.
    \end{split}
\end{equation}
We majorize the indicator function for each cube $\cC\subeq \boxdot_{\kappa_0}^\pm$ of side length $\kappa$ by a translation ${f_M^\pm}^{\cC}(\vec{\phi})$ of the Selberg majorizing function $f_M^\pm(\vec{\phi})$, and define $$F_M^-(\vec{\phi}) := \sum_{\cC\in \boxdot_{\kappa}^-} {f_M^-}^{\cC} (\vec{\phi})\le \one_{\vartriangle}(\vec{\phi}(\fa))\le \sum_{\cC\in \boxdot_{\kappa}^+} {f_M^+}^{\cC} (\vec{\phi}) =: F_M^+(\vec{\phi}).$$

\subsection{Zero-free region and zero-density estimate for Hecke $L$-functions}
Each Hecke character $\mu \lam^\mathbf{m}\bmod \ff$ has an associated $L$-function defined by
\begin{align*}
    L(s,\mu\lam^\mathbf{m}) &= \sum_{\fa} \mu\lam^\mathbf{m}(\fa) (N\fa)^{-s},
\end{align*}
with logarithmic derivative
\begin{align*}
    -\frac{L'}{L} (s,\mu\lam^\mathbf{m}) &= \sum_{\fa} \Lam(\fa) \mu\lam^\mathbf{m}(\fa) (N\fa)^{-s}.
\end{align*}
If $\mu$ is not primitive then it is induced by a primitive character $\mu_1\bmod \ff_1$ where $\ff_1\mid \ff$, and $\mu_1 \lam^\mathbf{m}$ is a primitive Hecke character mod $\ff_1.$ Furthermore, we have the $L$-function factorization $$L(s,\mu \lam^\mathbf{m}) = L(s, \mu_1 \lam^\mathbf{m}) \prod_{\fp\mid \ff} \bigp{ 1 - \mu_1 \lam^\mathbf{m}(\fp) (N\fp)^{-s}}.$$ The $L$-function for the primitive Hecke character $\mu_1 \lam^\mathbf{m}$ satisfies the functional equation \cite[Theorem 12.3]{iwaniec} given by
\begin{equation}\label{equation:hecke-functional-equation}
L(s,\mu_1 \lam^\mathbf{m}) = w(\mu_1 \lam^\mathbf{m}) A^{\frac{1}{2} - s} L_\infty(1-s, \mathbf{m}, \wh{\mu_1}) L(1-s, \mu_1 \lam^\mathbf{m})
\end{equation}
for some $\abs{w} = 1$, where the following notation is used:
\begin{itemize}
    \item $A = \abs{\Delta_{K}} \cdot (N\ff_1) \cdot \pi^{-n} 2^{-r_2}$.
    \item $\hat{\mu}_1$ is the sign character induced by $\mu_1$, and $$L_\infty(s, \mathbf{m}, \hat{\mu}_1) = \bigp{\prod_{j=1}^{r_1} \frac{\Gamma\bigp{\frac{1}{2}(s+a_j - ib_j)}}{\Gamma\bigp{\frac{1}{2}(1 - s + a_j + ib_j)}}} \bigp{\prod_{j = r_1 + 1}^{r_1 + r_2} \frac{\Gamma\bigp{s + \frac{1}{2} \abs{a_j} - ib_j}}{\Gamma\bigp{1 - s + \frac{1}{2} \abs{a_j} + ib_j}}},$$ where the values $a_1,\ldots,a_{r_1}\in \{0,1\}$ are determined by $\hat{\mu}_1$ and the values $a_{r_1+1},\ldots,a_{r_1+r_2}\in \ZZ$ and $b_1,\ldots,b_{r_1+r_2}\in \RR$ are determined by $\mathbf{m}.$
\end{itemize}
The trivial zeros of $\mu \lam^\mathbf{m}$ occur at the poles of the $\Gamma$ factor and include zeros contributed by the factors $$\prod_{\fp\mid \ff} \bigp{ 1 - \mu_1 \lam^\mathbf{m}(\fp) (N\fp)^{-s}}$$ for non-primitive $\mu$ on $\Re s = 0.$ The first kind are at negative-integer or even-integer translates of a fixed set of complex numbers with non-positive real parts.

In order to get an asymptotic for the smooth minorant and majorant
\begin{equation}\label{equation:smooth-minorant-majorant}
    \begin{split}
        \sum_{\fa} \one_{I'}(\fa) g_u^-(N\fa) F_M^-(\vec{\phi}(\fa))\Lam(\fa)
        &\le \sum_{\fa} \one_{I'}(\fa) \one_{[x, x+h)}(N\fa) \one_{\vartriangle}(\vec{\phi}(\fa)) \Lam(\fa)
        \\&\le \sum_{\fa} \one_{I'}(\fa) g_u^+(N\fa) F_M^+(\vec{\phi}(\fa))\Lam(\fa)
    \end{split}
\end{equation}
in terms of $x$, we will use a suitable zero-free region and a zero-density estimate for the Hecke $L$-functions $L(s,\mu \lam^\mathbf{m})$. The zero-free region is due to Coleman:
\begin{theorem}[{\cite[Theorem 2]{coleman-zero-free-region}}]\label{theorem:coleman-zero-free-region}
Let $\mu \lam^\mathbf{m}\bmod \ff$ be a Hecke character and $L(\sigma + it, \mu \lam^\mathbf{m})$ the associated Hecke $L$-function. Denote $V = \max\bigp{\norm{\mathbf{m}}, t}$. For a constant $A > 0$ depending only on $K$ and sufficiently large $V$, we have 
\begin{equation*}
    L(\sigma + it, \mu \lam^\mathbf{m}) \neq 0 \text{ for } \sigma \ge 1 - \frac{A}{\max\bigp{\log N\ff, (\log V)^{\frac{2}{3}} (\log \log V)^{\frac{1}{3}}}}
\end{equation*}
apart from a possible exceptional zero when $\mathbf{m} = \mathbf{0}$ and $\mu$ is a real character mod $\ff$.
\end{theorem}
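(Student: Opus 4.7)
The plan is to follow the classical Vinogradov-Korobov strategy adapted to the setting of Hecke $L$-functions of a number field, with the infinite part of the character $\lam^{\mathbf{m}}$ playing a role analogous to the conductor alongside the usual analytic parameter $t$. First I would reduce to the primitive case by replacing $\mu\lam^{\mathbf{m}}$ by its underlying primitive character mod $\ff_1\mid \ff$, at the cost of finitely many Euler factors that contribute only harmlessly on $\Re s=0$. I would then set up the Hadamard factorization for $L(s,\mu_1\lam^{\mathbf{m}})$ using the functional equation \eqref{equation:hecke-functional-equation}, which gives a partial-fraction expansion of $L'/L$ in terms of the non-trivial zeros and controlled archimedean gamma-factor contributions depending on $\mathbf{m}$ and $t$.

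Next I would apply the standard non-negativity trick based on $3+4\cos\theta+\cos 2\theta\ge 0$: for $\sigma>1$,
\[
-3\,\Re\frac{\zeta_K'}{\zeta_K}(\sigma)-4\,\Re\frac{L'}{L}(\sigma+it,\mu\lam^{\mathbf{m}})-\Re\frac{L'}{L}(\sigma+2it,\mu^2\lam^{2\mathbf{m}})\ge 0.
\]
Inserting the Hadamard expansion and isolating the contribution of a hypothetical zero $\rho=\beta+i\gamma$ of $L(s,\mu\lam^{\mathbf{m}})$ near $\sigma=1$, one gets the classical Gronwall-Landau zero-free region $\beta\le 1-c/\log Q$ provided one has upper bounds for $\Re(L'/L)$ on vertical lines just to the right of $\sigma=1$, with $Q$ an appropriate analytic conductor built from $N\ff$, $V=\max(\norm{\mathbf{m}},|t|)$, and the archimedean data.

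To sharpen this to the Vinogradov-Korobov strength, the key analytic input is a subconvex-type upper bound of the form $\log|L(\sigma+it,\mu\lam^{\mathbf{m}})|\ll (\log V)^{2/3}(\log\log V)^{1/3}$ in a narrow strip $1-\eta\le\sigma\le 1$. I would extract this by writing $L$ via an approximate functional equation as a truncated sum $\sum_{\fa}\mu\lam^{\mathbf{m}}(\fa)(N\fa)^{-s}$ and then bounding the resulting exponential sums using Vinogradov's mean value theorem; here one must handle exponentials not only in $\log N\fa$ (giving the $t$-aspect) but also in the infinite components $(\alpha^{(j)}/|\alpha^{(j)}|)^{u_j}|\alpha^{(j)}|^{iv_j}$ coming from $\lam^{\mathbf{m}}$ (giving the $\norm{\mathbf{m}}$-aspect). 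Once this is in hand, the Borel-Carath\'eodory lemma converts the $\log|L|$ bound into a bound on $\Re(L'/L)$ with uniform dependence on $V$ and $N\ff$, and feeding this back into the trigonometric inequality produces the claimed zero-free region, with the standard exception for a possible real Siegel zero occurring only when $\mathbf{m}=\mathbf{0}$ and $\mu$ is a real (hence quadratic) character.

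The main obstacle is precisely the Vinogradov-type estimate in the presence of the infinite character: the relevant exponential sums become genuinely $r_1+r_2$-dimensional because of the archimedean places of $K$, so one must work within a suitable fundamental domain for the action of the totally positive units and establish a uniform mean-value bound that captures the joint dependence on the unit-vector data $u_l$ and the norm data $v_j$. Without this multidimensional analogue of the Vinogradov integral, one only obtains the weaker Gronwall-Landau region, and the $(\log V)^{2/3}(\log\log V)^{1/3}$ denominator cannot be reached.
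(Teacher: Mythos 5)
The paper does not prove this statement: it is quoted from Coleman's 1990 \emph{Mathematika} paper \cite{coleman-zero-free-region} and used as a black box, so there is no in-paper proof to compare against. Your sketch is nonetheless a fair high-level account of the strategy Coleman actually follows: reduction to a primitive character, Hadamard factorization via the functional equation \eqref{equation:hecke-functional-equation}, the $3+4\cos\theta+\cos 2\theta$ positivity trick, and a Vinogradov--Korobov upgrade driven by exponential-sum bounds in which the archimedean data of $\lam^{\mathbf{m}}$ (contributing the $\norm{\mathbf{m}}$-aspect of $V$) must be treated jointly with the $t$-aspect. You have correctly identified the genuine difficulty --- a multidimensional Vinogradov mean-value estimate carried out over a fundamental domain for the totally positive units --- and correctly placed the exceptional-zero caveat with $\mathbf{m}=\mathbf{0}$ and $\mu$ real. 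One step you compress too quickly: to reach the $(\log V)^{2/3}(\log\log V)^{1/3}$ scale, a Borel--Carath\'eodory bound on $\Re(L'/L)$ alone is not quite enough; one also needs zero-counting in small discs (Jensen/Littlewood) to control nearby zeros when inserting the Hadamard expansion. And of course the sketch leaves the multidimensional Vinogradov estimate entirely unexecuted, which is where essentially all of Coleman's work lies. As a roadmap it is sound; as a proof it remains a sketch.
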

Recall that since $\ff = (1)$ in the statement of Theorem \ref{theorem:galois-primes}, the dependence on $N\ff$ in Theorem \ref{theorem:coleman-zero-free-region} can be ignored. We now state a zero-density estimate for Hecke $L$-functions due to Coleman \cite{coleman-zero-free-region}. Define $N(\sigma, T, \mu \lam^\mathbf{m})$ for $\sigma\in [0,1]$ to be the number of zeros of $L(s,\mu \lam^\mathbf{m})$ with $\Re s \ge \sigma$ and $\abs{\Im s} \le T$.

\begin{theorem}[{\cite[Lemma 1]{coleman-general}}] \label{theorem:coleman-zero-density-estimate}
Fix a finite-order Hecke character $\mu\bmod \ff$. There is a constant $B$ depending only on $K$ such that
\begin{equation*} 
       \sum_{\norm{\mathbf{m}} \leq T} N(\sigma, T, \mu \lam^\mathbf{m}) \ll_\mu T^{\frac{5n}{2}(1 - \sigma)}(\log T)^B
    \end{equation*}
for $0 \le \sigma \le 1$. 
\end{theorem}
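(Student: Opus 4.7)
The plan is to follow the Halász--Montgomery density method adapted to the family of Hecke $L$-functions $\{L(s,\mu\lam^{\mathbf m})\}_{\norm{\mathbf m}\leq T}$. The proof has three ingredients: a zero detection step via mollification, a large sieve inequality for Hecke characters, and an optimization of parameters that produces the exponent $\tfrac{5n}{2}$.

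For the detection step, introduce a mollifier
\[
M_Y(s,\mu\lam^{\mathbf m}) = \sum_{N\fa\leq Y} \mu_K(\fa)\,\mu\lam^{\mathbf m}(\fa)\,(N\fa)^{-s},
\]
where $\mu_K$ is the Möbius function on ideals of $\cO_K$, so that $L\cdot M - 1$ is a Dirichlet series supported on $N\fa > Y$. If $\rho = \beta + i\gamma$ is a zero of $L(s,\mu\lam^{\mathbf m})$ with $\beta\ge \sigma$ and $|\gamma|\le T$, then a Mellin contour shift past $s = \rho$, combined with the rapid decay of the Gamma factors in the functional equation \eqref{equation:hecke-functional-equation}, implies that for suitable parameters $X, Z$ and controlled coefficients $a_\fa$ with $|a_\fa|\ll \tau(\fa)$, the Dirichlet polynomial
\[
S(\rho;\mu\lam^{\mathbf m}) = \sum_{X \leq N\fa \leq XZ} a_\fa\, \mu\lam^{\mathbf m}(\fa)\,(N\fa)^{-\rho}
\]
satisfies $|S(\rho;\mu\lam^{\mathbf m})| \gg 1$. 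A standard Littlewood-type count $N(\sigma,T+1,\mu\lam^{\mathbf m}) - N(\sigma,T,\mu\lam^{\mathbf m}) \ll \log(T + \norm{\mathbf m})$ then allows us to restrict to a subset of zeros whose imaginary parts are pairwise $1$-separated within each character, losing only a factor of $\log T$.

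The heart of the argument is the large sieve inequality for Hecke characters,
\[
\sum_{\norm{\mathbf m}\leq T}\sum_{k} \bigl| S(\rho_k;\mu\lam^{\mathbf m}) \bigr|^2 \ll \bigl(XZ + T^n\bigr)(\log T)^{O(1)} \sum_{X \leq N\fa \leq XZ} |a_\fa|^2 (N\fa)^{-2\sigma},
\]
in which the factor $T^n$ records the total ``analytic conductor volume'' of the family: $T^{n-1}$ Grössencharakteres $\lam^{\mathbf m}$ with $\norm{\mathbf m}\leq T$ times $T$ separated values of $\gamma$. Proving this is the main technical obstacle, because the dual form requires bounding $\sum_{\mathbf m,t}\mu\lam^{\mathbf m}(\fa)\ol{\mu\lam^{\mathbf m}(\fb)}(N\fa/N\fb)^{it}$, which via the explicit description $\lam^{\mathbf m}((\alpha)) = \prod_j|\alpha^{(j)}|^{iv_j}\prod_l(\alpha^{(l)}/|\alpha^{(l)}|)^{u_l}$ becomes a count of lattice points in a short box of the Minkowski embedding modulo units; Coleman's approach in \cite{coleman-zero-free-region} handles this via logarithmic coordinates and a multidimensional large sieve on $\TT^{n-1}\times\RR$. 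Substituting $|S(\rho;\mu\lam^{\mathbf m})| \gg 1$ into the large sieve gives a total zero count of shape $(XZ + T^n) X^{1-2\sigma}(\log T)^{O(1)}$, and choosing $X, Z$ as powers of $T$ with $XZ \asymp T^n$ balances the two terms to yield the exponent $\tfrac{5n}{2}(1-\sigma)$; the remaining range where $\sigma$ is close to $\tfrac12$ is covered by the trivial bound $N(\sigma,T,\mu\lam^{\mathbf m}) \ll T^n\log T$ summed over $\mathbf m$.
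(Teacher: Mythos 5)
The paper does not prove this statement: it cites Coleman's Lemma 1 in \cite{coleman-general}, so your proposal is an independent proof sketch rather than a variant of an argument appearing here. Your plan follows the right tradition (mollified zero detection plus a large sieve over the Hecke family $\{\mu\lam^{\mathbf{m}}\}_{\norm{\mathbf{m}}\le T}$, in the spirit of Duke and Coleman), and the heuristic that $T^n$ is the family's conductor volume, with $\frac{5}{2}=\max_{\sigma\in[1/2,1]}\min\bigp{\frac{3}{2-\sigma},\frac{2}{\sigma}}$, correctly locates where the exponent comes from.

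However, there is a real gap. A Bessel-type large sieve applied to a mollified zero detector gives, after the correct (and more intricate than you indicate) parameter choices, the Ingham-type exponent $\frac{3}{2-\sigma}(1-\sigma)$ on the conductor volume. Since $\frac{3}{2-\sigma}\le\frac{5}{2}$ only for $\sigma\le\frac{4}{5}$, your argument as written does not give $T^{\frac{5n}{2}(1-\sigma)}$ on $\sigma\in\bigp{\frac{4}{5},1}$ --- which is exactly the critical range in Lemma~\ref{lemma:explicit-formula-error-term-small-imaginary-part}, where the supremum is attained near $\sigma=1$. Closing that range needs the Hal\'asz--Montgomery large-values machinery for Dirichlet polynomials (Montgomery \cite{montgomery-mean}, extended to Hecke families by Hinz \cite{hinz} and Coleman, as the paper itself records in the proof of Lemma~\ref{lemma:duke-zero-density-estimate}), which produces the complementary $\frac{2}{\sigma}(1-\sigma)$ exponent. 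You name ``Hal\'asz--Montgomery'' and mention the dual form of the large sieve, but duality is merely how one proves the Bessel-type inequality; it is not the large-values input. Relatedly, the rough balance ``choose $XZ\asymp T^n$ to get $\frac{5n}{2}(1-\sigma)$'' cannot stand on its own: if $X,Z$ were free subject only to $XZ\asymp T^n$, taking $X$ maximal in $(XZ+T^n)X^{1-2\sigma}$ would give $T^{2n(1-\sigma)}$, i.e.\ the density hypothesis, which these tools do not prove. The hidden constraints on the mollifier length and detector cutoff --- forced by controlling the contour-shift errors via the functional equation --- are what produce the weaker exponent, and your sketch never states them.
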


\subsection{Explicit formula}

We return now to estimating the smooth minorant and majorant \eqref{equation:smooth-minorant-majorant}. We rewrite each of the terms that appears in the left hand side of \eqref{equation:smooth-minorant-majorant} as follows. Using the orthogonality relations for narrow ideal class characters, we have $$\one_{I'}(\fa) = \frac{1}{\abs{\cI}} \sum_{\eta\bmod (1)} \bar{\eta}(I')\eta(\fa),$$ where $\cI$ denotes the narrow ideal class group and $\eta\bmod (1)$ runs over all $\abs{\cI}$ narrow ideal class characters. Using the Mellin inversion formula for $g_u^\pm(y)$, we have $$g_u^\pm(N\fa) = \frac{1}{2\pi i} \int_{2-i\infty}^{2+i\infty} G_u^\pm(s)(N\fa)^{-s}\,ds.$$ Finally, expand $F_M^\pm$ in its Fourier series $$F_M^\pm(\vec{\phi}(\fa)) = \sum_{\substack{\mathbf{m}\in \ZZ^d \\ \norm{\mathbf{m}}\le M}} \wh{F_M^\pm}(\mathbf{m})\lam^{\mathbf{m}}(\fa),\quad \lam^\mathbf{m}(\fa) = \prod_{j=1}^d \lam_j^{m_j}(\fa).$$ Hence we have
\begin{align}
    &\sum_{\fa} \mathbf{1}_{I'}(\fa) g_u^\pm(N\fa) F_M^\pm(\vec{\phi}(\fa))\Lam(\fa) \nonumber
    \\&= \sum_{\fa} \frac{1}{\abs{\cI}} \sum_{\eta\bmod (1)} \bar{\eta}(I')  \sum_{\substack{\mathbf{m}\in \ZZ^d \\ \norm{\mathbf{m}}\le M}} \wh{F_M^\pm}(\mathbf{m}) \frac{1}{2\pi i} \int_{2-i\infty}^{2+i\infty} G_u^\pm(s) \Lam(\fa) \eta \lam^\mathbf{m}(\fa) (N\fa)^{-s}\,ds \nonumber
    \\&= \frac{1}{\abs{\cI}} \sum_{\eta\bmod (1)} \bar{\eta}(I')  \sum_{\substack{\mathbf{m}\in \ZZ^d \\ \norm{\mathbf{m}}\le M}} \wh{F_M^\pm}(\mathbf{m}) \frac{1}{2\pi i} \int_{2-i\infty}^{2+i\infty} G_u^\pm(s)\bigp{-\frac{L'}{L}(s,\eta\lam^\mathbf{m})}\,ds. \label{equation:von-mangoldt-sum-expansion}
\end{align}

We now express the integral in \eqref{equation:von-mangoldt-sum-expansion} as a sum over the zeros and poles of the Hecke characters $\eta \lam^\mathbf{m}$ for $\norm{\mathbf{m}}\le M$. We use the following classical result on the vertical distribution of zeros of $L(s,\mu \lam^\mathbf{m})$:

\begin{lemma} \label{lemma:hecke-vertical-zero-distribution-gap}
For any $\abs{T}$ sufficiently large, there are $\ll \log (\abs{T}+{M}) + \log(N\ff)$ non-trivial zeros $\rho$ of $L(s,\mu \lam^\mathbf{m})$ for $\norm{\mathbf{m}}\le M$ such that $\abs{T - \Im \rho}<1$. In particular, there is a gap in the vertical distribution of zeros of length $\gg \bigp{\log (\abs{T}+{M}) + \log(N\ff)}^{-1}$.
\end{lemma}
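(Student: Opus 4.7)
The plan is to apply Jensen's formula to the Hecke $L$-function $L(s,\mu\lam^\mathbf{m})$ on a disk centered at $s_0 = 2 + iT$, using standard upper bounds coming from absolute convergence on $\Re s = 2$, the functional equation \eqref{equation:hecke-functional-equation} on $\Re s = -1$, and Phragmén--Lindelöf convexity in between. I would first replace $\mu\lam^\mathbf{m}$ by the primitive character $\mu_1\lam^\mathbf{m}$ that induces it; the finitely many local Euler factors $\prod_{\fp \mid \ff}(1 - \mu_1\lam^\mathbf{m}(\fp)(N\fp)^{-s})$ contribute only $O(\log N\ff)$ additional zeros on $\Re s = 0$ in any fixed-length vertical interval, which is absorbed by the target bound.

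Next, I would fix a radius $r > \sqrt{5}$ so that the disk $\{|s - s_0| \le r\}$ contains the rectangle $[0,2] \times [T-1, T+1]$, and bound $|L(s,\mu_1\lam^\mathbf{m})|$ on its boundary. On $\Re s = 2$ the Dirichlet series converges absolutely, giving $|L| = O_K(1)$. On $\Re s = -1$, the functional equation \eqref{equation:hecke-functional-equation} combined with Stirling's formula for $L_\infty(1-s,\mathbf{m},\wh{\mu_1})$ gives a polynomial bound in the analytic conductor $\mathfrak{q}(T,\mathbf{m})$. The parameters $a_j, b_j$ appearing in \eqref{equation:hecke-functional-equation} are linear combinations of the components of $\mathbf{m}$ with coefficients depending only on the fixed basis $\lam_1,\ldots,\lam_d$, so $|a_j|, |b_j| \ll_K \norm{\mathbf{m}} \le M$, and hence
\[
\log\mathfrak{q}(T,\mathbf{m}) \ll_K \log(|T|+M) + \log N\ff.
\]
Phragmén--Lindelöf propagates a polynomial bound $|L(s,\mu_1\lam^\mathbf{m})| \ll \mathfrak{q}(T,\mathbf{m})^C$ to the whole disk.

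Jensen's formula applied to $(s-1)L(s,\mu_1\lam^\mathbf{m})$ (to absorb the possible simple pole at $s=1$ when $\mathbf{m}=\mathbf{0}$ and $\mu$ is trivial) in the disk, combined with the uniform lower bound $|L(s_0,\mu_1\lam^\mathbf{m})| \gg_K 1$ from the absolutely convergent Euler product at $\Re s = 2$, bounds the number of non-trivial zeros in the concentric sub-disk of radius $\sqrt{5}$ by $\ll \log(|T|+M) + \log N\ff$. This proves the first claim, since every non-trivial zero with $|T - \Im\rho| < 1$ lies in $[0,1]\times[T-1,T+1]$ and hence in this sub-disk. The gap assertion then follows by pigeonhole: if at most $N$ zeros have imaginary parts in $[T-1,T+1]$, some subinterval of length $2/(N+1) \gg N^{-1}$ must be zero-free. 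The main obstacle is the analytic-conductor bookkeeping in the middle step --- carefully tracing the dependence of the Gamma parameters $a_j, b_j$ on $\mathbf{m}$ through the functional equation and verifying that the convexity bound holds uniformly across the disk, since the parameters $b_j$ effectively shift the critical line of the Gamma factors.
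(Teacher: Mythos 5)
Your argument is correct, and it is morally the same proof that the paper invokes by citation: the paper's own proof is the single sentence ``This follows from [Iwaniec--Kowalski, Proposition 5.7],'' which is a general zero-counting bound for $L$-functions in the Iwaniec--Kowalski Chapter~5 framework, established there by exactly the Jensen/Phragm\'en--Lindel\"of/Stirling chain you lay out. What you have done, in effect, is reconstitute the proof of the cited proposition specialized to Hecke $L$-functions, with the one genuinely content-bearing observation being the analytic-conductor bookkeeping: the archimedean parameters $a_j,b_j$ in \eqref{equation:hecke-functional-equation} are linear in $\mathbf m$ with coefficients depending only on the fixed basis $\lam_1,\dots,\lam_d$, so $|a_j|,|b_j|\ll_K M$ and hence $\log\mathfrak q(T,\mathbf m)\ll_K\log(|T|+M)+\log N\ff$ uniformly for $\norm{\mathbf m}\le M$. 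Two small cosmetic remarks. First, you should also impose $r<3$ (compatible with $r>\sqrt5$) so that the Jensen disc of radius $r$ about $2+iT$ stays inside the half-plane $\Re s>-1$ where your convexity bound for $(s-1)L(s,\mu_1\lam^{\mathbf m})$ has been established; the part of the circle with $\Re s>2$ is of course handled trivially by absolute convergence. Second, the paper's own convention (stated in the paragraph following \eqref{equation:hecke-functional-equation}) already classifies the zeros of the local factors $\prod_{\fp\mid\ff}(1-\mu_1\lam^{\mathbf m}(\fp)(N\fp)^{-s})$ on $\Re s=0$ as \emph{trivial}, so your opening reduction to the primitive character is not strictly needed for the statement as worded, though it is harmless and makes your proof robust to other conventions. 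Neither point affects the substance; the argument is complete and matches the mechanism behind the cited result.
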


\begin{proof}
    This follows from \cite[Proposition 5.7]{iwaniec-kowalski}.
\end{proof}

A standard argument using contour integration analogous to the proof of \cite[Lemma 3.3]{rouse} yields the smoothed explicit formula
\begin{equation} \label{equation:von-mangoldt-explicit-formula}
\begin{split}
    &\sum_{\fa} \one_{I'}(\fa) g_u^\pm(N\fa) F_M^\pm(\vec{\phi}(\fa))\Lam(\fa)
    \\&= \frac{1}{\abs{\cI}} \bigp{\widehat{F_M^\pm}(0)G_u^\pm(1) - \sum_{\eta\bmod (1)} \bar{\eta}(I') \sum_{\substack{\mathbf{m}\in \ZZ^d \\ \norm{\mathbf{m}}\le M}}  \widehat{F_M^\pm}(\mathbf{m}) \sum_{L(\rho, \eta\lam^\mathbf{m}) = 0} G_u^\pm(\rho)}.
\end{split}
\end{equation}
Here we have used the fact that $L(s, \eta \lam^\mathbf{m})$ has a pole at $s=1$ if and only if $\eta = 1$ and $\mathbf{m} = \mathbf{0}.$ We split \eqref{equation:von-mangoldt-explicit-formula} into the main term $\widehat{F_M^\pm}(0)G_u^\pm(1)$ and the sum over zeros $$\sum_{\eta\bmod (1)} \bar{\eta}(I') \sum_{\substack{\mathbf{m}\in \ZZ^d \\ \norm{\mathbf{m}}\le M}}  \widehat{F_M^\pm}(\mathbf{m}) \sum_{L(\rho, \eta\lam^\mathbf{m}) = 0} G_u^\pm(\rho).$$

\subsection{Estimating the main term} \label{section:estimating-main-term}
First we estimate the main term $\widehat{F_M^\pm}(0)G_u^\pm(1)$. Recall the definitions of $\vartriangle_0$, $\kappa$, and $\kappa_0$ from Section \ref{section:principle-inclusion-exclusion} and Section \ref{section:smoothing-functions}. We use Theorem \ref{theorem:trigonometric-approximation}, noting that there are $\vol(\boxdot_{\kappa}^\pm)\kappa^{-d}$ cubes $\cC$ in $\boxdot_\kappa^\pm$. By \eqref{equation:cube-volume-error-estimate}, we compute
\begin{align*}
    \widehat{F_M^\pm}(0)
    &= \vol(\boxdot_{\kappa}^\pm)\left(1 +  O\bigp{ \kappa^{-d}\max\bigp{\frac{\kappa^{d-1}}{M} ,M^{-d}}}\right)
    \\&= (x^{-\delta})^d\bigp{\vol(\vartriangle_0) + O\bigp{\kappa_0 \cdot \vol(\p \vartriangle_0)}} \bigp{1 +  O\bigp{\kappa^{-d}\max\bigp{\frac{\kappa^{d-1}}{M}, M^{-d}}}}
    \\&= (x^{-\delta})^d \vol(\vartriangle_0)\bigp{1 + O_{\vartriangle_0} \bigp{\kappa_0 + \kappa^{-d} \max\bigp{\frac{\kappa^{d-1}}{M}, M^{-d}}}}
\end{align*}
using the fact that $\vol(\vartriangle_0)$ and $\vol(\p \vartriangle_0)$ are constants that only depend on the choice of automorphisms $\sigma_1<\cdots<\sigma_k$. From the definition of the Mellin transform $G_u^\pm$ we have $$G_u^\pm(1) = \int_{0}^\infty g_u^\pm(y)\, dy = h + O(u).$$ Combining yields
\begin{equation}\label{equation:explicit-formula-error-main-term}
    \widehat{F_M^\pm}(0)G_u^\pm(1) = h(x^{-\delta})^d \vol(\vartriangle_0)\left(1 + O_{\vartriangle_0}\bigp{ \frac{u}{h} + \kappa_0 + \frac{1}{\kappa M}}\right)
 \end{equation}
so long as $u = o(h), \kappa_0 = o(1)$, and $(\kappa M)^{-1} = o(1)$, where the asymptotics are with respect to $x\to \infty$.

\subsection{Estimating the sum over zeros} \label{section:estimating-sum-over-zeros}

Now we estimate the sum over zero. Using Lemma \ref{lemma:trigonometric-approximation-fourier-coeffient-bound} with $M\gg \kappa^{-1}$ and $\kappa_0 \ll 1$, along with the fact that there are $\vol(\boxdot_{\kappa}^\pm)\kappa^{-d}$ cubes $\cC$ in $\boxdot_\kappa$, we compute
\begin{equation} \label{equation:explicit-formula-sum-over-zeros}
    \begin{split}
        &\frac{1}{\abs{\cI}} \sum_{\eta\bmod (1)} \bar{\eta}(I') \sum_{\substack{\mathbf{m}\in \ZZ^d \\ \norm{\mathbf{m}}\le M}}  \widehat{F_M^\pm}(\mathbf{m}) \sum_{L(\rho, \eta\lam^\mathbf{m}) = 0} G_u^\pm(\rho)
        \\&\ll \frac{1}{\abs{\cI}} \sum_{\eta\bmod (1)} \sum_{\substack{\mathbf{m}\in \ZZ^d \\ \norm{\mathbf{m}}\le M}}  \abs{\widehat{F_M^\pm}(\mathbf{m})} \sum_{L(\rho, \eta\lam^\mathbf{m}) = 0} \abs{G_u^\pm(\rho)}
        \\&\ll \max_{\eta\bmod (1)}  \sum_{\substack{\mathbf{m}\in \ZZ^d \\ \norm{\mathbf{m}}\le M}}  \bigp{\vol(\boxdot_{\kappa}^\pm)\kappa^{-d}}\kappa^{d} \sum_{L(\rho, \eta\lam^\mathbf{m}) = 0} \abs{G_u^\pm(\rho)}
        \\&\ll \max_{\eta\bmod (1)} \sum_{\substack{\mathbf{m}\in \ZZ^d \\ \norm{\mathbf{m}}\le M}} (x^{-\delta})^d \vol(\vartriangle_0) \bigp{1+O_{\vartriangle_0}(\kappa_0)} \sum_{L(\rho, \eta\lam^\mathbf{m}) = 0} \abs{G_u^\pm(\rho)}
        \\&\ll_{\vartriangle_0} (x^{-\delta})^d \max_{\eta\bmod (1)}  \sum_{\substack{\mathbf{m}\in \ZZ^d \\ \norm{\mathbf{m}}\le M}}  \sum_{L(\rho, \eta\lam^\mathbf{m}) = 0} \abs{G_u^\pm(\rho)}.
    \end{split}
\end{equation}
Fix a narrow class character $\eta\bmod (1)$. We will split the sum over zeros $\rho$ in those with $\abs{\Im \rho} \le M$ and those with $\abs{\Im \rho} > M$.

We first bound the sum over zeros with $\abs{\Im \rho} > M$. By \eqref{equation:mellin-transform-decay} we have $$G_u^\pm(\rho) \ll_\ell u^{-\ell} \abs{\Im \rho}^{-\ell} hx^{\ell}.$$
Since the trivial zeros on $\Im \rho = 0$ of $\eta$ are evenly spaced, the rapid decay of $G_u^\pm(\rho)$ guarantees that both sets of trivial zeros contribute negligibly to the sum in \eqref{equation:explicit-formula-sum-over-zeros} compared to our eventual bound for the sum over the non-trivial zeros. For the rest of the argument, we restrict to a sum over non-trivial zeros $\rho$ of $L(s,\eta \lam^\mathbf{m})$. Using Lemma \ref{lemma:hecke-vertical-zero-distribution-gap} for the vertical distribution of non-trivial zeros, we have
\begin{equation} \label{equation:explicit-formula-error-term-high-imaginary-part}
    \begin{split}
        \sum_{\substack{\mathbf{m}\in \ZZ^d \\ \norm{\mathbf{m}}\le M}} \sum_{\substack{L(\rho, \eta\lam^\mathbf{m}) = 0 \\ \abs{\Im \rho} > M}} \abs{G_u^\pm(\rho)}
        &\ll_{\ell} \int_{M}^\infty  u^{-\ell} t^{-\ell} hx^{\ell} M^d\log(\abs{t} + M)\,dt
        \\&\ll_\ell (x^{-\delta})^d u^{-\ell} M^{-\ell + 1 + d} hx^\ell \log M
    \end{split}
\end{equation}

Next, we bound the sum over zeros with $\abs{\Im \rho} \le M$. Taking $\ell = 0$ in \eqref{equation:mellin-transform-decay}, we have $G_u^\pm(\rho) \ll hx^{\Re \rho - 1}$. Hence we obtain
\begin{align*}
    \sum_{\substack{\mathbf{m}\in \ZZ^d \\ \norm{\mathbf{m}}\le M}} \sum_{\substack{L(\rho, \eta\lam^\mathbf{m}) = 0 \\ \abs{\Im \rho} \le M}} \abs{G_u^\pm(\rho)}
    &\ll \sum_{\substack{\mathbf{m}\in \ZZ^d \\ \norm{\mathbf{m}}\le M}}  \sum_{\substack{L(\rho, \eta\lam^\mathbf{m}) = 0 \\ \abs{\Im \rho} \le M}} hx^{\Re \rho - 1}.
\end{align*}

\begin{lemma}\label{lemma:explicit-formula-error-term-small-imaginary-part}
If $M = x^{\tau}$ with $0 < \tau < \frac{2}{5n}$, then
$$\sum_{\substack{\mathbf{m}\in \ZZ^d \\ \norm{\mathbf{m}}\le M}} \sum_{\substack{L(\rho, \eta\lam^\mathbf{m}) = 0 \\ \abs{\Im \rho} \le M}} x^{\Re \rho - 1} \ll e^{-c_1 (\log x)^{c_2}}.$$
for some constants $c_1, c_2 > 0$ that depend on $\tau$.
\end{lemma}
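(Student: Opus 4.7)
The plan is to combine the Coleman zero-free region (Theorem~\ref{theorem:coleman-zero-free-region}) with the Coleman zero-density estimate (Theorem~\ref{theorem:coleman-zero-density-estimate}) through the standard Riemann--von Mangoldt device. For a non-trivial zero $\rho = \beta + i\gamma$ with $\beta \in [0,1]$, I will write $x^{\beta - 1} = x^{-1} + (\log x)\int_0^\beta x^{\sigma - 1}\,d\sigma$. Swapping sums and integrals gives
\[
\sum_{\norm{\mathbf{m}}\le M}\sum_{\substack{L(\rho,\eta\lam^\mathbf{m}) = 0 \\ |\Im\rho|\le M}} x^{\beta - 1} = x^{-1}\cdot\#\{\rho\} + (\log x)\int_0^1 x^{\sigma - 1}\,S(\sigma)\,d\sigma,
\]
where $S(\sigma) := \sum_{\norm{\mathbf{m}}\le M} N(\sigma, M, \eta\lam^\mathbf{m})$. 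By Lemma~\ref{lemma:hecke-vertical-zero-distribution-gap}, the total count of non-trivial zeros in the relevant range is $\ll M^{d+1}\log M = x^{\tau n}\log x$, and since $\tau n < 2/5$ the term $x^{-1}\cdot\#\{\rho\}$ is $\ll x^{\tau n - 1}\log x$, which is negligible.

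For the main integral, I will apply Theorem~\ref{theorem:coleman-zero-free-region} with $V \le M = x^\tau$: every non-trivial zero with $|\Im\rho|\le M$ satisfies $\Re\rho \le \sigma^*$ for $\sigma^* := 1 - A'(\log x)^{-2/3}(\log\log x)^{-1/3}$, apart from the possible exceptional zero when $\mathbf{m} = \mathbf{0}$ and $\eta$ is real. (The $\log N\ff$ factor disappears because $\ff = (1)$.) I therefore truncate the $\sigma$-integral at $\sigma^*$. Theorem~\ref{theorem:coleman-zero-density-estimate} with $T = M = x^\tau$ yields $S(\sigma) \ll x^{(5n\tau/2)(1 - \sigma)}(\log x)^B$, so the integrand becomes $x^{-(1-\sigma)\vartheta}(\log x)^B$, where $\vartheta := 1 - 5n\tau/2 > 0$ by hypothesis. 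This integrand is increasing in $\sigma$, so the truncated integral is dominated by its value at $\sigma = \sigma^*$:
\[
(\log x)^{B+1}\int_0^{\sigma^*} x^{-(1-\sigma)\vartheta}\,d\sigma \ll (\log x)^{B+1}\exp\bigp{-A'\vartheta\cdot (\log x)^{1/3}(\log\log x)^{-1/3}},
\]
which is $\ll e^{-c_1(\log x)^{c_2}}$ for any $0 < c_2 < 1/3$ by absorbing the logarithmic prefactor.

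Finally, the exceptional zero $\beta_0$ of $L(s,\eta)$ (when $\mathbf{m} = \mathbf{0}$ and $\eta$ is a real narrow class character) must be treated separately. Since $\eta$ ranges over the finite set of narrow class characters mod $(1)$, Siegel's theorem gives $1 - \beta_0 \gg_K 1$, so $x^{\beta_0 - 1} \ll x^{-c}$ for some $c = c(K) > 0$, which is amply better than the required bound. The main obstacle is the balancing of parameters: the $5n/2$ exponent in the zero-density estimate must be overcome by the zero-free region savings of order $(\log x)^{1/3}/(\log\log x)^{1/3}$, and the hypothesis $\tau < 2/(5n)$ is precisely what ensures that $\vartheta > 0$ so that the exponential decay survives. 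The ineffectivity of Siegel's theorem is harmless here because it affects only the implicit constant $c_1$.
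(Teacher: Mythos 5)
Your proof is correct and follows essentially the same route as the paper: partial summation converts the zero sum into a Stieltjes integral against $N(\sigma, M, \eta\lam^{\mathbf m})$, which you then bound by combining Coleman's zero-free region (Theorem~\ref{theorem:coleman-zero-free-region}) with his zero-density estimate (Theorem~\ref{theorem:coleman-zero-density-estimate}), exactly as the paper does. The only cosmetic difference is your appeal to Siegel's theorem for the possible exceptional zero, which is unnecessary here since $\eta$ ranges over the fixed finite set of narrow class characters mod $(1)$, so $1-\beta_0 \gg_K 1$ already follows from $L(1,\eta)\neq 0$ for nontrivial real $\eta$ (though your conclusion $x^{\beta_0-1}\ll x^{-c}$ is of course correct either way).
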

\begin{proof}
We compute using partial summation that

\begin{equation} \label{equation:partial-summation-short-intervals}
    \begin{split}
        &\sum_{\substack{\mathbf{m}\in \ZZ^d \\ \norm{\mathbf{m}}\le M}} \sum_{\substack{L(\rho, \eta\lam^\mathbf{m}) = 0 \\ \abs{\Im \rho} \le M}} x^{\Re \rho - 1}
        \\&= \sum_{\substack{\mathbf{m}\in \ZZ^d \\ \norm{\mathbf{m}}\le M}} \int_{0}^1 x^{\sigma - 1}\,d N(\sigma, M, \eta \lam^\mathbf{m})
        \\&= \sum_{\substack{\mathbf{m}\in \ZZ^d \\ \norm{\mathbf{m}}\le M}} \bigp{N(0, M, \eta \lam^\mathbf{m})x^{-1}\log x + \int_{0}^1  N(\sigma, M, \eta \lam^\mathbf{m})x^{\sigma - 1} \log x\,d\sigma}
        \\&\ll \sup_{\sigma\in [0,1]}\sum_{\substack{\mathbf{m}\in \ZZ^d \\ \norm{\mathbf{m}}\le M}} N(\sigma, M, \eta \lam^\mathbf{m})x^{\sigma - 1} \log x.
    \end{split}
\end{equation}
By Theorem \ref{theorem:coleman-zero-free-region} and Theorem \ref{theorem:coleman-zero-density-estimate}, the right hand side of \eqref{equation:partial-summation-short-intervals} is
\begin{align*}
    &\ll \max\bigp{(\log x)^{B+1} \cdot  \sup_{\sigma\in \bigb{\frac12, 1-A(\log M)^{-\frac{2}{3}} (\log \log M)^{-\frac{1}{3}}}} \bigp{x^{\sigma - 1} M^{\frac{5n}{2}(1-\sigma)}}, x^{\beta - 1}\log x}
    \\&\ll (\log x)^{B+1} \cdot  x^{-\frac{A\bigp{1 - \frac{5n}{2}\tau}}{(\log M)^{\frac{2}{3} + o(1)}}}
    \\&\ll e^{-A\tau^{-\frac{2}{3}}\bigp{1 - \frac{5n}{2}\tau} (\log x)^{\frac{1}{3} - o(1)}}
\end{align*}
where $\beta < 1$ denotes the possible exceptional zero for $\mathbf{m} = 0$ if $\eta$ is real. We thus deduce Lemma \ref{lemma:explicit-formula-error-term-small-imaginary-part} with $c_1 = A\tau^{-\frac{2}{3}}\bigp{1 - \frac{5n}{2}\tau}$ and $0<c_2 < \frac{1}{3}$.
\end{proof}

\subsection{Combining the error estimates}\label{section:combining-error-estimates}

It is now time to collect all of the error terms. The error from the main term is computed in \eqref{equation:explicit-formula-error-main-term}. The error from the zeros with $\abs{\Im \rho} > M$ is computed in \eqref{equation:explicit-formula-error-term-high-imaginary-part}. The error from the zeros with $\abs{\Im \rho}\le M$ is computed in Lemma \ref{lemma:explicit-formula-error-term-small-imaginary-part}. The sum of the error terms is bounded by

\begin{align*}
    &\ll_{\ell, \vartriangle_0} \ub{h(x^{-\delta})^d \cdot \bigp{ \frac{u}{h} + \kappa_0 + \frac{1}{\kappa M}}}_\text{main term error} + \ub{(x^{-\delta})^d \cdot  u^{-\ell} M^{-\ell + 1 + d} hx^\ell \log M}_\text{sum over $\abs{\Im \rho} > M$} + \ub{(x^{-\delta})^d  \cdot h e^{-c_1 (\log x)^{c_2}}}_\text{sum over $\abs{\Im \rho} \le M$}
    \\&\ll_{\ell,\vartriangle_0} h(x^{-\delta})^d \bigp{\frac{u}{h} + \kappa_0 + \frac{x^{\delta}}{\kappa_0 M} + u^{-\ell} x^{\ell} M^{-\ell + 1 + d} \log M +  e^{-c_1(\log x)^{c_2}}}.
\end{align*}

Recall that the parameter $\delta$ corresponds to the sector width and $\delta'$ corresponds to the short interval $h = x^{1-\delta'}$ in the statement of Theorem \ref{theorem:galois-primes}. Given $\delta, \delta'<\frac{2}{5n}$, we now choose values for the parameters $u$, $M$, and $\kappa_0$:
\begin{equation*}
    u = x^{1-\tau'},\quad M = x^{\tau}, \quad \kappa_0 = e^{-c_1 (\log x)^{c_2}}, \quad \delta,\delta' < \tau' < \tau < \frac{2}{5n}.
\end{equation*}
Note that the choice of $\tau$ determines the value $c_1$ in the proof of Lemma \ref{lemma:explicit-formula-error-term-small-imaginary-part}, and $c_2$ can be chosen to be any constant in the range $0<c_2<\frac{1}{3}$. The sum of the error terms is bounded by
\begin{align*}\label{equation:explicit-formula-error-sum}
    &\ll_{\ell,\vartriangle_0} h(x^{-\delta})^d \bigp{\frac{u}{h} + \kappa_0 + \frac{x^{\delta}}{\kappa_0 M} + u^{-\ell} x^{\ell} M^{-\ell + 1 + d} \log M +  e^{-c_1(\log x)^{c_2}}}
    \\&\ll_{\ell,\vartriangle_0} h(x^{-\delta})^d \bigp{ x^{-(\tau' - \delta')} + e^{-c_1 (\log x)^{c_2}} + e^{c_1 (\log x)^{c_2}}\cdot x^{-(\tau - \delta)} + x^{-\ell (1-\tau') + \ell + \tau(-\ell + 1 + d) + o(1)}}
    \\&\ll_{\ell,\vartriangle_0} h(x^{-\delta})^d \bigp{ e^{-c_1 (\log x)^{c_2}} + x^{-\ell( \tau - \tau')+ \tau(d+1) + o(1)}}.
\end{align*}
Finally, choose $\ell$ large enough so that $$\ell > \frac{\tau(d+1)}{\tau - \tau'},$$ in which case $-\ell( \tau - \tau')+ \tau(d+1) + o(1)$ is negative and uniformly bounded away from 0 for all sufficiently large $x$. We conclude that the sum of the error terms is bounded by $$\ll  h(x^{-\delta})^d \cdot e^{-c_1 (\log x)^{c_2}}.$$

In conclusion, we have
\begin{align*}
    \sum_{\fa} \one_{I'}(\fa) g_u^\pm(N\fa) f_M^\pm(\vec{\phi}(\fa))\Lam(\fa)
    &= \frac{h \vol(\vartriangle)}{\abs{\cI}}\bigp{1 + O\bigp{e^{-c_1(\log x)^{c_2}}}}
    \\&= \frac{h \vol(\vartriangle_0)x^{-(n-1)\delta}}{\abs{\cI}} \bigp{1 + O\bigp{e^{-c_1(\log x)^{c_2}}}},
\end{align*}
from which we deduce $$\sum_{\fa} \one_{I'}(\fa) \one_{[x, x+h)}(N\fa) \one_{\vartriangle}(\vec{\phi}(\fa)) \Lam(\fa) = \frac{\vol(\vartriangle_0) h x^{-(n-1)\delta}}{\abs{\cI}} \bigp{1 + O\bigp{e^{-c_1(\log x)^{c_2}}}}.$$ By summing at most $2^n$ such asymptotics with different polytopes $\vartriangle_0$, as discussed in Section \ref{section:principle-inclusion-exclusion}, \eqref{equation:x-delta} follows.

Finally, to deduce Theorem \ref{theorem:galois-primes} from \eqref{equation:x-delta}, it suffices to slightly change the smoothing functions $F_M^\pm$. The majorant $F_M^+$ should be defined with respect to the original sector condition $\norm{\vec{\phi}(\mathfrak{p})-\vec{\phi}_0}<x^{-\delta}$ as usual. However, the minorant $F_M^-$ should be defined with respect to the sector condition $\norm{\vec{\phi}(\mathfrak{p})-\vec{\phi}_0}<(x + h)^{-\delta}$. The only difference in the proof is a change in the non-error part of the main term $\widehat{F_M^\pm}(0)$ from $(x^{-\delta})^d \vol(\vartriangle_0)$ to $\bigp{(x+h)^{-\delta}}^d\vol(\vartriangle_0)$. The error incurred is $\ll_{\vartriangle_0} (x^{-\delta})^d e^{-c_1(\log x)^{c_2}}$ since $h = x^{1 - \delta'}$ for $\delta'>0$, and hence the error term in \eqref{equation:x-delta} is preserved.

\section{Proof of Theorem \ref{theorem:bombieri-vinogradov-galois-primes}}

As in the proof of Theorem \ref{theorem:galois-primes}, we will instead show the inequality
\begin{equation} \label{equation:bombieri-vinogradov-x-delta}
\sum_{\substack{q \leq Q \\ H_K^+\cap \QQ(\zeta_q) = \QQ }}\max_{\gcd(a, q) = 1}\Bigg|\sum_{\substack{ x\le p < x+h \\ p \equiv a\bmod q\\ \exists \fp \in I \text{ s.t.} \\ N\fp = p \\ \|\vec{\phi}(\fp) - \vec{\phi}_0\|< x^{-\delta} }} \log p - \frac{chx^{-(n-1)\delta}}{\vhi(q) \abs{\cI}}\Bigg| \ll_{A} \frac{hx^{-(n-1)\delta}}{(\log x)^A},
\end{equation}
noting that Theorem \ref{theorem:bombieri-vinogradov-galois-primes} can be deduced from \eqref{equation:bombieri-vinogradov-x-delta} the same way as in the last paragraph of Section \ref{section:proof-galois-primes}. The proof will be analogous to that of the original Bombieri-Vinogradov inequality using a zero-density estimate for the relevant $L$-functions. By the principle of inclusion-exclusion in Section \ref{section:principle-inclusion-exclusion}, it suffices to show that for all automorphisms $\sigma_1<\cdots < \sigma_k \in \Gal(K/\QQ)$ we have
$$\sideset{}{'}\sum_{\substack{q \leq Q}}\max_{\gcd(a, q) = 1}\Bigg| \sum_{\substack{x\le N\fp < x+h \\ N\fp \equiv a \bmod q \\ \forall 1\le j\le k \\ \sigma_j(\fp)\in I \\  N(\sigma_j(\fp)) = p \\ \norm{\vec{\phi}(\sigma_j(\fp)) - \vec{\phi}_0} < x^{-\delta}}} \log (N\fp) - \frac{\vol(\vartriangle_0) hx^{-(n-1)\delta}}{\vhi(q) \abs{\cI}}\Bigg| \ll_A \frac{hx^{-(n-1)\delta}}{(\log x)^A},$$ as Theorem \ref{theorem:bombieri-vinogradov-galois-primes} will follow by summing at most $2^n$ such asymptotics with different polytopes $\vartriangle_0$ and using the triangle inequality.

Because there are at most $\ll x^{\frac12} q^{-\frac12}$ ideals $\fa$ satisfying $N\fa = p^k$ with $k\ge 2$ and $N\fa \equiv a\bmod q$ in the interval $x\le N\fa < x+h$, we have
\begin{equation}\label{equation:von-mangoldt-arithmetic-progressions}
    \sum_{\substack{x\le N\fp < x+h \\ N\fp \equiv a \bmod q \\ \forall 1\le j\le k \\ \sigma_j(\fp)\in I \\  N(\sigma_j(\fp)) = p \\ \norm{\vec{\phi}(\sigma_j(\fp)) - \vec{\phi}_0} < x^{-\delta}}} \log (N\fp) = O\bigp{x^{\frac12} q^{-\frac12}  \log x} + \sum_{\substack{x\le N\fa < x+h \\ N\fa \equiv a \bmod q \\ \forall 1\le j\le k \\ \sigma_j(\fa)\in I  \\ \norm{\vec{\phi}(\sigma_j(\fa)) - \vec{\phi}_0} < x^{-\delta}}} \Lam(\fa).
\end{equation}
Summing over all $q\le Q = x^{\theta}$, the accumulated error is $\ll x^{\frac12} Q^{\frac12} \log x$, which is negligible to the right hand side of Theorem \ref{theorem:bombieri-vinogradov-galois-primes} for our range of $\delta$, $\delta'$, and $\theta$. It thus suffices to show
$$\sideset{}{'}\sum_{\substack{q \leq Q}}\max_{\gcd(a, q) = 1}\Bigg| \sum_{\substack{x\le N\fa < x+h \\ N\fa \equiv a \bmod q \\ \forall 1\le j\le k \\ \sigma_j(\fa)\in I  \\ \norm{\vec{\phi}(\sigma_j(\fa)) - \vec{\phi}_0} < x^{-\delta}}} \Lam(\fa) - \frac{\vol(\vartriangle_0) hx^{-(n-1)\delta}}{\vhi(q) \abs{\cI}}\Bigg| \ll_A \frac{hx^{-(n-1)\delta}}{(\log x)^A}.$$

\subsection{Explicit formula} \label{section:refined-explicit-formula}
We first derive a refinement to \eqref{equation:von-mangoldt-explicit-formula} that imposes an additional constraint that the prime $p$ lies in a fixed residue class $a\bmod q$ with $\gcd(a,q) = 1$ for some modulus $q$ relatively prime to $\Delta_{K}$. We remark that the condition $\gcd(q, \Delta_{K}) = 1$ is necessary to achieve equidistribution among the $\vhi(q)$ relatively prime residue classes mod $q.$ As an example, let $K = \QQ(i), \Delta_{K} = -4$ and let $I$ denote the trivial ideal class. Every prime $p$ represented by the norm of a principal ideal is of the form $a^2 + b^2$ for $a,b\in \ZZ$. When $p$ is odd, it must be the case that $p\equiv 1 \bmod 4$, so equidistribution mod $q$ for any $q \equiv 0\bmod 4$ is not achieved. The condition $\gcd(q, \Delta_{K}) = 1$ rules out these $q$.

We have the following refined explicit formula, with the same definitions of the smoothing functions $g_u^\pm$ and $F_M^\pm$ as in Section \ref{section:smoothing-functions} and the additional notation $\chi\eta \lam^\mathbf{m} = \chi(N\fa)\eta(\fa)\lam^\mathbf{m}(\fa)$:
\begin{equation} \label{equation:von-mangoldt-explicit-formula-arithmetic-progressions}
    \begin{split}
        &\sum_{\fa} \one_{I'}(\fa) \one_{a\bmod q}(N\fa) g_u^\pm (N\fa) F_M^\pm (\vec{\phi}(\fa))\Lam(\fa)
        \\&= \frac{1}{\vhi(q) \abs{\cI}} \bigp{\widehat{F_M^\pm}(0)G_u^\pm(1) - \sum_{\chi\bmod q} \bar{\chi}(a) \sum_{\eta\bmod (1)} \bar{\eta}(I') \sum_{\substack{\mathbf{m}\in \ZZ^d \\ \norm{\mathbf{m}}\le M}}  \widehat{F_M^\pm}(\mathbf{m}) \sum_{L(\rho ,\chi \eta \lam^\mathbf{m}) = 0} G_u^\pm(\rho)}.
    \end{split}
\end{equation}
Using the orthogonality relations for Dirichlet characters, we have written $$\one_{a\bmod q}(N\fa) = \frac{1}{\vhi(q)} \sum_{\chi\bmod q} \bar{\chi}(a)\chi(N\fa),$$ where the sum runs over all $\vhi(q)$ Dirichlet characters modulo $q$. Hence the only change from \eqref{equation:von-mangoldt-explicit-formula} to \eqref{equation:von-mangoldt-explicit-formula-arithmetic-progressions} is the addition of the twist of the Hecke character $\eta \lam^\mathbf{m}$ by the finite-order Hecke character $\chi(N\fa)$. The last fact needed to justify \eqref{equation:von-mangoldt-explicit-formula-arithmetic-progressions} is that $L(s,\chi \eta \lam^\mathbf{m})$ has a pole at $s = 1$ if and only if $\chi=1$ is the trivial Dirichlet character mod $q$, $\eta = 1$ is the trivial narrow ideal class character mod $(1)$, and $\mathbf{m} = \mathbf{0}$. This is guaranteed by the condition $H_K^+\cap \QQ(\zeta_q) = \QQ$, as shown in \cite{murty-petersen}.

\subsection{Reduction to a sum over zeros}

The next step is to use the explicit formula from \eqref{equation:von-mangoldt-explicit-formula-arithmetic-progressions}. As in Section \ref{section:combining-error-estimates}, we choose
\begin{equation*}
    u = x^{1-\tau'},\quad M = x^{\tau}, \quad \delta,\delta' < \tau' < \tau,\quad 2\theta + \tau < \frac{2}{5n}.
\end{equation*}
By \eqref{equation:mellin-transform-decay} and the error estimates in Section \ref{section:estimating-main-term} and Section \ref{section:estimating-sum-over-zeros}, it follows that for $\kappa_0\ll_A (\log x)^{-A - 1}$, we have

\begin{equation} \label{equation:sum-over-zeros-1}
    \begin{split}
        &\sideset{}{'}\sum_{\substack{q \leq Q}} \max_{\gcd(a, q) = 1}\Bigg| \sum_{\substack{x\le N\fa < x+h \\ N\fa \equiv a \bmod q \\ \forall 1\le j\le k \\ \sigma_j(\fa)\in I  \\ \norm{\vec{\phi}(\sigma_j(\fa)) - \vec{\phi}_0} < x^{-\delta}}} \Lam(\fa) - \frac{\vol(\vartriangle_0) hx^{-(n-1)\delta}}{\vhi(q) \abs{\cI}}\Bigg|
        \\&\ll_A \sideset{}{'}\sum_{\substack{q \leq Q}} \frac{hx^{-(n-1)\delta}}{\vhi(q) \abs{\cI}} \bigp{x^{-(\tau' - \delta')} + (\log x)^{-A-1} + (\log x)^{A+1} x \cdot x^{-(\tau - \delta)}}
        \\&+ \sideset{}{'}\sum_{\substack{q \leq Q}} \frac{x^{-(n-1)\delta}}{\vhi(q) \abs{\cI}} \sum_{\chi\bmod q} \sum_{\eta\bmod (1)} \sum_{\substack{\mathbf{m}\in \ZZ^d \\ \norm{\mathbf{m}}\le M}}  \sum_{L(\rho ,\chi \eta \lam^\mathbf{m}) = 0} \abs{G_u^\pm(\rho)}
        \\&\ll_{A}
        \sideset{}{'}\sum_{\substack{q \leq Q}} \frac{hx^{-(n-1)\delta}}{\vhi(q)\abs{\cI}} \bigp{ (\log x)^{-A-1} +  \sum_{\chi\bmod q} \sum_{\eta\bmod (1)} \sum_{\substack{\mathbf{m}\in \ZZ^d \\ \norm{\mathbf{m}}\le M}}  \sum_{L(\rho ,\chi \eta \lam^\mathbf{m}) = 0} \frac{\abs{G_u^{\pm}(\rho)}}{h}}.
    \end{split}
\end{equation}
Using the well-known inequalities 
$$\frac{1}{\phi(q)} \ll \frac{\log\log q}{q}$$
and
$$\quad \sum_{q \leq Q}\frac{1}{\phi(q)} \ll \log Q,$$ we deduce that \eqref{equation:sum-over-zeros-1} is
\begin{equation*} \label{equation:sum-over-zeros-2}
    \begin{split}
        \ll_A \frac{hx^{-(n-1)\delta}}{\abs{\cI}} \bigp{ (\log x)^{-A}+ \sideset{}{'}\sum_{\substack{q \leq Q}} \frac{\log \log q}{q} \sum_{\chi\bmod q} \sum_{\eta\bmod (1)} \sum_{\substack{\mathbf{m}\in \ZZ^d \\ \norm{\mathbf{m}}\le M}}  \sum_{L(\rho ,\chi \eta \lam^\mathbf{m}) = 0} \frac{\abs{G_u^{\pm}(\rho)}}{h}}.
    \end{split}
\end{equation*}
Hence to prove Theorem \ref{theorem:bombieri-vinogradov-galois-primes}, it suffices to show that
\begin{equation} \label{equation:sum-over-zeros-inequality}
    \sideset{}{'}\sum_{\substack{q \leq Q}} \frac{1}{q} \sum_{\chi\bmod q} \sum_{\eta\bmod (1)} \sum_{\substack{\mathbf{m}\in \ZZ^d \\ \norm{\mathbf{m}}\le M}}  \sum_{L(\rho ,\chi \eta \lam^\mathbf{m}) = 0} \frac{\abs{G_u^{\pm}(\rho)}}{h} \ll_A (\log x)^{-A-1},
\end{equation}
where we absorb the factor of $\log \log q \le \log \log Q$ into the right hand side and treat $\abs{\cI}$ as a constant since the narrow ideal class group is fixed by the number field $K$.

\subsection{Reduction to non-trivial zeros of primitive characters}

The goal of this section will be to reduce the sum in \eqref{equation:sum-over-zeros-inequality} to a sum over primitive Hecke characters. Since $\chi \eta = 1$ is principal if and only if $\chi = 1$ and $\eta = 1$ simultaneously, for a fixed $q$ the finite-order Hecke characters $\chi \eta$ are induced by pairwise distinct primitive Hecke characters. The conductor of $\chi$ divides $(q)$ and the conductor of $\eta$ is $(1)$, so the conductor $\fa$ of $\mu = \chi \eta$ divides $(q)$ and hence $N\fa\le q^n$. Since the trivial zeros on $\Im s = 0$ of $\eta$ are evenly spaced for $\eta$ and the trivial zeros on $\Re s = 0$ of non-primitive $\eta$ are evenly spaced as well, the rapid decay of $G_u^\pm(s)$ guarantees that both sets of trivial zeros contribute negligibly to the sum in \eqref{equation:sum-over-zeros-inequality} compared to our eventual bound of the sum over the non-trivial zeros. We thus have
\begin{equation} \label{equation:reduction-to-non-trivial-zeros}
    \begin{split}
        \sideset{}{'}\sum_{\substack{q \leq Q}} \frac{1}{q} \sum_{\chi\bmod q} \sum_{\eta\bmod (1)} \sum_{\substack{\mathbf{m}\in \ZZ^d \\ \norm{\mathbf{m}}\le M}}  \sum_{L(\rho ,\chi \eta \lam^\mathbf{m}) = 0} \frac{\abs{G_u^{\pm}(\rho)}}{h} &\le \sideset{}{'}\sum_{\substack{q \leq Q}} \frac{1}{q} \sum_{\mu \bmod (q)} \sum_{\substack{\mathbf{m}\in \ZZ^d \\ \norm{\mathbf{m}}\le M}}  \sum_{L(\rho ,\mu \lam^\mathbf{m}) = 0} \frac{\abs{G_u^{\pm}(\rho)}}{h}
    \end{split}
\end{equation}
where the inner sum is over non-trivial zeros $\rho$ of $L(s,\mu \lam^\mathbf{m})$.

A primitive Hecke character $\mu_1 \bmod \fa_1$ induces some number of Hecke characters with modulus $(q),\,q\le Q$. Each such $q$ must satisfy $N(q) = q^n\mid N\fa_1$, hence we can bound $$\sum_{\substack{q\le Q \\ \fa_1\mid (q)}} \frac{1}{q} \ll (\log Q) (N\fa_1)^{-\frac1n}.$$ Using $\sum^*$ to denote a sum over only primitive characters, \eqref{equation:reduction-to-non-trivial-zeros} is

\begin{equation} \label{equation:reduction-to-primitive-characters}
    \begin{split}
        &\ll (\log Q) \sum_{N\fa\le Q^n} ~\sideset{}{^*}\sum_{\mu \bmod \fa} (N\fa)^{-\frac1n} \sum_{\substack{\mathbf{m}\in \ZZ^d \\ \norm{\mathbf{m}}\le M}}  \sum_{L(\rho ,\mu \lam^\mathbf{m}) = 0} \frac{\abs{G_u^{\pm}(\rho)}}{h}
        \\&\ll (\log Q)^2 \max_{1\le R\le Q^n}R^{-\frac1n} \sum_{N\fa\le R} ~\sideset{}{^*}\sum_{\mu \bmod \fa}  \sum_{\substack{\mathbf{m}\in \ZZ^d \\ \norm{\mathbf{m}}\le M}}  \sum_{L(\rho ,\mu \lam^\mathbf{m}) = 0} \frac{\abs{G_u^{\pm}(\rho)}}{h},
    \end{split}
\end{equation}
where the last step uses a standard dyadic decomposition technique that decomposes the interval $[1, Q]$ into sub-intervals of the form $[2^{k}, 2^{k+1}),\, k=0,1,\ldots,\ceil{\log_2 Q}$.

\subsection{Zero-density estimates}
We prove that for $1\le R\le Q^n$ we have $$U(R) = R^{-\frac1n} \sum_{N\fa\le R} ~\sideset{}{^*}\sum_{\mu \bmod \fa}  \sum_{\substack{\mathbf{m}\in \ZZ^d \\ \norm{\mathbf{m}}\le M}}  \sum_{L(\rho ,\mu \lam^\mathbf{m}) = 0} \frac{\abs{G_u^{\pm}(\rho)}}{h}\ll_A (\log x)^{-A}.$$ This will conclude the proof of Theorem \ref{theorem:bombieri-vinogradov-galois-primes} by \eqref{equation:sum-over-zeros-inequality} and \eqref{equation:reduction-to-primitive-characters}. We need the following results concerning the zeros of Hecke $L$-functions.

\begin{lemma}[\cite{fogels}] 
\label{lemma:siegel-theorem-hecke}
For any $\eps > 0$, there is a constant $c_1(\eps) > 0$ such that for any real finite-order Hecke character $\mu\bmod \ff$ we have $L(\sigma, \mu) \neq 0$ for $\sigma \geq 1 - c_1(\eps)(N\ff)^{-\eps}$.
\end{lemma}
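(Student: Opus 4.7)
I plan to follow Siegel's classical ineffective argument, adapted to the setting of Hecke $L$-functions as in Fogels's work. The overall strategy is a dichotomy: either no real finite-order Hecke character produces an exceptional real zero too close to $s = 1$, or a distinguished ``witness'' character exists whose exceptional zero can be leveraged, via a positivity argument, to control $L(1, \mu)$ for every other real character.

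The first step is to split into the two cases. Fix $\eps > 0$. Suppose that for some $\eps_0 = \eps_0(\eps) > 0$, no real primitive finite-order Hecke character $\mu'$ has a real zero in $[1 - \eps_0, 1)$. Then standard arguments, combining the non-vanishing of $L(s, \mu)$ on $\Re s = 1$ with the zero-free region of Theorem~\ref{theorem:coleman-zero-free-region}, already produce a zero-free region of the claimed form. Otherwise, fix once and for all a real primitive character $\mu_1 \bmod \ff_1$ together with a real zero $\beta_1 \in [1 - \eps_0, 1)$, and prove the bound for every other real primitive $\mu \bmod \ff$ via the following positivity construction.

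The second step is the positivity argument. Given a real primitive $\mu \bmod \ff$ for which $\mu \mu_1$ is nontrivial, form
\begin{equation*}
F(s) = \zeta_K(s) L(s, \mu_1) L(s, \mu) L(s, \mu_1 \mu).
\end{equation*}
By class field theory, this equals, up to finitely many Euler factors, the Dedekind zeta function of the biquadratic extension of $K$ corresponding to the pair $\{\mu_1, \mu\}$; hence $F(s) = \sum_\fa a(\fa) (N\fa)^{-s}$ with $a(\fa) \ge 0$. Since $F$ has a simple pole at $s = 1$ with residue proportional to $L(1, \mu_1) L(1, \mu) L(1, \mu_1 \mu)$ and vanishes at $s = \beta_1$, a standard Taylor/contour argument starting from a base point $s_0 > 1$, together with the elementary upper bound $L(1, \mu_1 \mu) \ll \log(N\ff_1 \cdot N\ff)$, yields a lower bound of the form
\begin{equation*}
L(1, \mu) \gg_{\mu_1} (1 - \beta_1)(N\ff)^{-C(\beta_1)}.
\end{equation*}
Converting this into a zero-free region through the usual relation between $L(\sigma, \mu)$ and $L(1, \mu)$ for $\sigma$ near $1$ (derivative bounds from the functional equation), and then optimizing the choice of $\mu_1$ against $\eps_0$, produces the announced bound $\sigma \ge 1 - c_1(\eps)(N\ff)^{-\eps}$.

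The main obstacle is the inherent ineffectivity: the constant $c_1(\eps)$ depends on whether the witness character $\mu_1$ exists and on the value of $1 - \beta_1$, neither of which can be made explicit. Technically, one must also handle the archimedean $\Gamma$-factors of the functional equation \eqref{equation:hecke-functional-equation} uniformly in the conductor (which, for finite-order Hecke characters, reduces to tracking the finitely many sign data at real places) and carefully manage the conductor of $\mu_1 \mu$, whose norm is bounded by $(N\ff_1)(N\ff)$.
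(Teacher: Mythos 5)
The paper does not actually prove this lemma: it is stated as a citation to Fogels's 1962/63 paper (the three-page note \emph{\"Uber die Ausnahmenullstelle der Heckeschen $L$-Funktionen}), so there is no internal proof to compare against. Your sketch is, nonetheless, a faithful high-level reconstruction of the Siegel-type argument Fogels actually uses: the ineffective dichotomy, the choice of a witness character $\mu_1$ with a real zero $\beta_1$ near $1$, and a positivity argument with the auxiliary $L$-function $F(s) = \zeta_K(s)L(s,\mu_1)L(s,\mu)L(s,\mu_1\mu)$.

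One step, as stated, is not airtight. You deduce nonnegativity of the Dirichlet coefficients of $F$ from the claim that $F$ ``equals, up to finitely many Euler factors, the Dedekind zeta function of the biquadratic extension.'' Removing or changing finitely many Euler factors does not automatically preserve nonnegativity of coefficients, since multiplying a nonnegative Dirichlet series by a factor of the form $(1 - \alpha (N\fp)^{-s})$ can introduce sign changes. The cleaner route is to check nonnegativity directly from the Euler product of $F$: writing $t = (N\fp)^{-s}$, $a = \mu_1(\fp)$, $b = \mu(\fp)$ with $a,b \in \{-1,0,1\}$, the local factor is $(1-t)^{-1}(1-at)^{-1}(1-bt)^{-1}(1-abt)^{-1}$, and in every case this is a power series in $t$ with nonnegative coefficients (the unramified cases reduce to $(1-t)^{-4}$ or $(1-t^2)^{-2}$, and the ramified cases drop factors harmlessly). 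Separately, you should dispose of the degenerate case $\mu = \mu_1$ (equivalently $\mu\mu_1$ trivial), where $F$ acquires a double pole at $s=1$ and the auxiliary construction breaks down; that case follows immediately from the fact that $\beta_1$ is a fixed zero at positive distance from $1$, so the bound holds for $\ff = \ff_1$ once $c_1(\eps)$ is taken small enough. With these two clarifications your plan is a correct account of what the cited result establishes.
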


\begin{lemma}[{\cite[Theorem A]{jeffrey-ramakrishnan}}]
\label{lemma:siegel-zero-sparsity-of-modulus}
There is a constant $c_2 > 0$ such that there is at most one character $\mu\lam^\mathbf{m}$ among the primitive characters with conductor norm at most $R$ whose $L$-function $L(s,\mu\lam^\mathbf{m})$ has at a real zero in the region $\Re s \geq 1 - \frac{c_2}{\log R}$, known as an exceptional zero. For this character, one must have $\mathbf{m} = \mathbf{0}$ and $\mu$ real. Furthermore, there can be at most one exceptional zero for this character, and it must be simple.
\end{lemma}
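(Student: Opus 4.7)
\textbf{Proof plan for Lemma \ref{lemma:siegel-zero-sparsity-of-modulus}.}
The plan is to combine Coleman's zero-free region (Theorem \ref{theorem:coleman-zero-free-region}), which already rules out exceptional zeros of any primitive Hecke character with $\mathbf{m}\neq \mathbf{0}$ or with $\mu$ nonreal, with the classical Landau--Page nonnegativity argument to handle the remaining real characters.

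\emph{Step 1 (Exceptional character must have $\mathbf{m}=\mathbf{0}$ and $\mu$ real).} For a primitive character $\mu\lam^\mathbf{m}$ of conductor norm at most $R$, we have $\log N\ff \le n\log R$, so Theorem \ref{theorem:coleman-zero-free-region} applied at $t=0$ and $V = O(1)$ gives $L(\sigma, \mu\lam^\mathbf{m}) \neq 0$ for $\sigma \ge 1 - A/\log R$, except possibly when $\mathbf{m} = \mathbf{0}$ and $\mu$ is a real finite-order character. Choosing $c_2 < A$ forces any exceptional character to be of this latter form.

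\emph{Step 2 (Uniqueness and simplicity via a Landau--Page product).} Suppose two distinct real primitive finite-order characters $\mu_1 \neq \mu_2$ of conductor norm at most $R$ have real zeros $\beta_1, \beta_2 \ge 1 - c_2/\log R$. Form
\[
F(s) = \zeta_K(s) L(s,\mu_1) L(s,\mu_2) L(s,\mu_1\mu_2).
\]
The Euler factor of $F$ at each unramified prime ideal $\fp$ is a product of factors of the form $(1 \pm (N\fp)^{-s})^{-1}$; case analysis over the four sign patterns of $(\mu_1(\fp), \mu_2(\fp))\in \{\pm 1\}^2$ shows that these expand to nonnegative Dirichlet coefficients with constant term $1$. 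Hence $F(s) \ge 1$ for real $s \in (1,2]$. On the other hand $F$ has a simple pole at $s=1$ inherited from $\zeta_K$ and zeros at $\beta_1, \beta_2$. Estimating $F(s_0)$ at $s_0 = 1 + 1/\log R$ via a Hadamard factorization (whose analytic conductor is at most a fixed power of $R$) and combining with $F(s_0) \ge 1$ yields a lower bound of the form $(1-\beta_1)(1-\beta_2) \gg 1/(\log R)^2$, contradicting $\beta_i \ge 1 - c_2/\log R$ for $c_2$ sufficiently small. The same argument applied to $\zeta_K(s)^2 L(s,\mu)^2$ (which also has nonnegative coefficients, using $\mu^2 = 1$) rules out both a double real zero of $L(s,\mu)$ and two distinct real zeros of $L(s,\mu)$ in the exceptional region, yielding simplicity and uniqueness of the exceptional zero.

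\emph{Main obstacle.} The main technical subtlety is bounding the analytic conductor of $F$ uniformly. Since $L(s, \mu_1\mu_2)$ need not be primitive, its conductor is bounded only by the product of those of $\mu_1$ and $\mu_2$, giving an analytic conductor of $F$ of at most a fixed power of $R$. One must therefore appeal to a Hadamard factorization valid at this generality (e.g.\ via \cite{iwaniec-kowalski}), and control possible trivial zeros of $F$ close to the real axis contributed by the archimedean $\Gamma$-factors of $\zeta_K$ and the $L$-factors. These complications are mild and can be folded into the implied constants, recovering the bound $c_2>0$ claimed in the lemma.
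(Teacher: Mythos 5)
The paper does not prove this lemma: it is stated with a bare citation to Hoffstein--Ramakrishnan's Theorem~A and no argument is given, so there is no internal proof to compare against. Your proposal therefore supplies something the paper omits. The Landau--Page framework you outline is indeed the standard way such results are proved, the nonnegativity check on the Euler factors of $F(s) = \zeta_K(s)L(s,\mu_1)L(s,\mu_2)L(s,\mu_1\mu_2)$ is correct (as is the $\zeta_K(s)^2L(s,\mu)^2$ variant for simplicity and uniqueness of the zero), and the obstacle you flag about the imprimitivity of $\mu_1\mu_2$ and the need to control the Hadamard factorization by a fixed power of $R$ is exactly the right thing to worry about; one also has to allow for one of $\mu_1,\mu_2$ being principal, in which case $F$ has a double pole and the order-counting changes slightly but the argument survives.

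There is, however, a gap in Step~1. You invoke Theorem~\ref{theorem:coleman-zero-free-region} ``at $t=0$ and $V = O(1)$'', but the lemma quantifies over \emph{all} $\mathbf m$, and on the real axis $V = \|\mathbf m\|$ is unbounded. Once $(\log\|\mathbf m\|)^{2/3}(\log\log\|\mathbf m\|)^{1/3} > \log R$, Coleman's region no longer contains $\sigma \ge 1 - A/\log R$, so Step~1 as written only rules out non-self-dual exceptional characters with $\|\mathbf m\|$ bounded. The intended reading of the cited theorem is that ``conductor'' means the \emph{analytic} conductor, which grows with $\|\mathbf m\|$; under that reading $N\ff \le R$ forces $\|\mathbf m\| \ll R^{1/n}$ and Coleman's denominator is again $\ll \log R$, so the step goes through — but then you must carry the $\mathbf m$-dependence through $V$ rather than setting $V = O(1)$. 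There is also a secondary issue: Theorem~\ref{theorem:coleman-zero-free-region} as stated applies ``for sufficiently large $V$'', so the regime of small nonzero $\|\mathbf m\|$ needs a separate (standard but not cited) zero-free-region input for non-self-dual characters near $s=1$. Neither point is fatal to the approach, but a complete proof must address both.
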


\begin{lemma} \label{lemma:duke-zero-density-estimate}
There is a constant $B > 0$ such that for all $\sigma \in \big[\frac{1}{2}, 1\big)$, $R > 1$ and $M > 2$,
$$\sum_{N\fa \leq R}~\sideset{}{^*}\sum_{\mu \bmod \fa} \sum_{\substack{\mathbf{m}\in \ZZ^d \\ \norm{\mathbf{m}}\le M}} N(\sigma, M,\mu\lam^{\mathbf{m}}) \ll (R^2M^n)^{\min\bigp{\frac{3}{2-\sigma},\frac{2}{\sigma}}(1-\sigma)} (\log RM)^B$$
\end{lemma}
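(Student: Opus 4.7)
The plan is to prove this as a Jutila--Huxley style joint zero-density estimate for the family of Hecke $L$-functions $L(s,\mu\lam^{\mathbf{m}})$ over $K$, generalizing the classical result for Dirichlet $L$-functions (Jutila \cite{jutila}). The two pieces of the minimum correspond to two complementary regimes: the exponent $\frac{3(1-\sigma)}{2-\sigma}$ arises from a mollifier plus large-sieve mean value argument and dominates for $\sigma$ close to $\frac{1}{2}$, while $\frac{2(1-\sigma)}{\sigma}$ comes from reflecting $\rho \mapsto 1-\rho$ through the functional equation \eqref{equation:hecke-functional-equation} and is tighter for $\sigma$ close to $1$, with crossover at $\sigma = \frac{4}{5}$.

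I would prove the first exponent in three steps. First, for each primitive character $\mu\lam^{\mathbf{m}}$ I introduce a mollifier $M_Y(s,\mu\lam^{\mathbf{m}}) = \sum_{N\fb \le Y} c_\fb\, \mu\lam^{\mathbf{m}}(\fb)(N\fb)^{-s}$ with coefficients $c_\fb$ built from the ideal Mobius function, approximating $L(s,\mu\lam^{\mathbf{m}})^{-1}$. A standard Perron/contour-shift argument (as in \cite{jutila} or \cite[Ch.~11]{davenport}) then yields, for every zero $\rho$ with $\Re\rho \ge \sigma$ and $|\Im\rho| \le M$, a detection inequality
\[
1 \ll \Bigl|\sum_{N \le N\fb \le 2N} a_\fb\, \mu\lam^{\mathbf{m}}(\fb)(N\fb)^{-\rho}\Bigr|
\]
for suitable coefficients $a_\fb$ on a dyadic range whose size $N$ is a function of $R$, $M$, and $\sigma$. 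Second, I square and sum the inequality over $\rho$, $\mathbf{m}$, $\mu$, and $\fa$, dyadically partitioning $|\Im\rho|$ and invoking Lemma~\ref{lemma:hecke-vertical-zero-distribution-gap} to separate clusters of zeros lying within unit vertical intervals. Third, I bound the resulting mean value by the large sieve inequality for primitive Hecke characters twisted by infinite-order characters,
\[
\sum_{N\fa \le R}~\sideset{}{^*}\sum_{\mu \bmod \fa} \sum_{\norm{\mathbf{m}} \le M} \Bigl|\sum_{N\fb \le N} a_\fb\, \mu\lam^{\mathbf{m}}(\fb)\Bigr|^2 \ll (N + R^2 M^n) \sum_{N\fb \le N} |a_\fb|^2 (\log RM)^{O(1)},
\]
which is the analog for $K$ of the Montgomery--Vaughan large sieve. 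Optimizing $Y$ and the dyadic parameters produces the exponent $\frac{3(1-\sigma)}{2-\sigma}$. For the second exponent, I apply the functional equation \eqref{equation:hecke-functional-equation} to send each zero $\beta + i\gamma$ to a zero of the dual series at $1-\beta + i\gamma$, re-run the mean value / large sieve argument on the reflected side with $\sigma$ replaced by $1-\sigma$, and track the analytic conductor $\abs{\Delta_K}(N\fa)\prod_{j}(1+|v_j|)$ that supplies the $R^2 M^n$ weighting.

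The main obstacle is establishing the large sieve inequality above with the correct $R^2 M^n$ factor uniformly in both the finite conductor $\fa$ and the infinite-order parameter $\mathbf{m}$. The $R^2$ factor is the expected bound from duality among primitive ray class characters with $N\fa \le R$; the $M^n$ factor reflects the dual equidistribution of the Grossencharacters $\lam^{\mathbf{m}}$ as $\mathbf{m}$ varies in $\ZZ^{n-1}$, and requires a Poisson summation argument on the archimedean torus $(K \otimes_{\QQ} \RR)^\times$ modulo global units to obtain a duality estimate uniform in $M$. Once this ingredient is secured, the remaining bookkeeping---choice of $Y$, dyadic partitioning of $|\Im\rho|$, and optimization of the ranges---is routine and follows the classical Jutila--Huxley blueprint with only cosmetic modifications for the Hecke setting.
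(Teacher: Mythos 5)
Your plan for the $\frac{3(1-\sigma)}{2-\sigma}$ exponent is essentially the right one: Montgomery-style zero detection (mollify, detect, square, large sieve), and you correctly identify the hybrid large sieve with the $N + R^2 M^n$ weight as the crucial input. The paper does not re-run this machinery from scratch; it observes that Duke \cite[Theorem 2.1]{duke} already carried it out for $R=1$, and that the extension to varying conductor $N\fa \le R$ is obtained by substituting Duke's large sieve \cite[Theorem 1.1 (ii)]{duke} (which averages over $\fa$ and $\mathbf{m}$ jointly) for the $R=1$ version \cite[Theorem 1.1 (i)]{duke}. So the ``main obstacle'' you flag is in fact already resolved in the literature, and the paper simply cites it. That said, your blueprint for this piece aligns with what Duke actually does.

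The genuine gap is in your account of the second exponent $\frac{2(1-\sigma)}{\sigma}$. You propose to obtain it by reflecting $\rho \mapsto 1-\rho$ through the functional equation \eqref{equation:hecke-functional-equation} and re-running the mean-value argument with $\sigma$ replaced by $1-\sigma$. This is not how that exponent arises, and the reflected argument does not produce it: re-detecting zeros with real part $1-\beta \le 1/2$ by the same mollifier method does not improve the exponent near $\sigma = 1$, and the analytic-conductor weighting you would pick up from the functional equation enters symmetrically rather than generating the $\frac{2}{\sigma}$ shape. The paper instead attributes this piece to Montgomery's method on large values of Dirichlet polynomials (the Hal\'asz-type bound in \cite{montgomery-mean}, feeding into the zero-density refinement in \cite{montgomery}), extended to $L(s,\mu)$ by Hinz \cite{hinz} and sketched for $L(s,\mu\lambda^{\mathbf{m}})$ by Coleman \cite{coleman-general}. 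In the classical Dirichlet setting this is exactly the step that improves Ingham's $\frac{3}{2-\sigma}$ to something sharper for $\sigma \ge 4/5$, and it is a large-values estimate on the detecting Dirichlet polynomial, not a functional-equation reflection. You would need to replace your reflection step with a large-values/Hal\'asz argument for Hecke Dirichlet polynomials indexed by $(\fa, \mu, \mathbf{m})$ to close the proof.
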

\begin{proof}
The zero-density estimate
\[
\sum_{N\fa \leq R}~\sideset{}{^*}\sum_{\mu \bmod \fa} \sum_{\substack{\mathbf{m}\in \ZZ^d \\ \norm{\mathbf{m}}\le M}} N(\sigma, M,\mu\lam^{\mathbf{m}}) \ll (R^2M^n)^{\frac{3}{2-\sigma}(1 - \sigma)} (\log RM)^B
\]
follows from work of Duke, who proved the corresponding result when $R=1$ is bounded \cite[Theorem 2.1]{duke}.  The only notable difference is that instead of applying the large sieve in \cite[Theorem 1.1 (i)]{duke}, we apply the large sieve in \cite[Theorem 1.1 (ii)]{duke}. The fourth-moment bound in \cite[Section 2.1]{duke}  and the zero detection process in \cite[Section 2.2]{duke} carry over completely analogously.

Duke's proof extends Montgomery's zero detection method for Dirichlet $L$-functions \cite{montgomery} to $L$-functions of the form $L(s,\mu\lambda^{\mathbf{m}})$.  As detailed in \cite{montgomery}, the method can be refined using results on the frequency with which certain Dirichlet polynomials attain large values \cite{montgomery-mean}.  Montgomery's work on large values of Dirichlet polynomials was extended to averages of $L$-functions of the form $L(s,\mu)$ by Hinz \cite{hinz}. Coleman \cite{coleman-general} outlined how Montgomery's work extends to averages of $L$-functions of the form $L(s,\mu\lambda^{\mathbf{m}})$. This results in the claimed improvement in the exponent.
\end{proof}

Let $\beta$ denote the exceptional zero for each $\mu \lambda^\mathbf{m}$, if such a zero exists. Then $U(R)$ can be split into the three terms

$$U_1(R) = R^{-\frac1n} \sum_{N\fa\le R} ~\sideset{}{^*}\sum_{\mu \bmod \fa}  \sum_{\substack{\mathbf{m}\in \ZZ^d \\ \norm{\mathbf{m}}\le M}}  \sum_{\substack{L(\rho ,\mu \lam^\mathbf{m}) = 0 \\ \rho = \beta}} \frac{\abs{G_u^{\pm}(\rho)}}{h}$$

$$U_2(R) = R^{-\frac1n} \sum_{N\fa\le R} ~\sideset{}{^*}\sum_{\mu \bmod \fa}  \sum_{\substack{\mathbf{m}\in \ZZ^d \\ \norm{\mathbf{m}}\le M}}  \sum_{\substack{L(\rho ,\mu \lam^\mathbf{m}) = 0 \\ \abs{\Im \rho} > M \\ \rho\neq \beta}} \frac{\abs{G_u^{\pm}(\rho)}}{h}$$

$$U_3(R) = R^{-\frac1n} \sum_{N\fa\le R} ~\sideset{}{^*}\sum_{\mu \bmod \fa}  \sum_{\substack{\mathbf{m}\in \ZZ^d \\ \norm{\mathbf{m}}\le M}}  \sum_{\substack{L(\rho ,\mu \lam^\mathbf{m}) = 0 \\ \abs{\Im \rho} \le M \\ \rho\neq \beta}} \frac{\abs{G_u^{\pm}(\rho)}}{h}.$$

We first bound $U_1(R)$. By Lemma \ref{lemma:siegel-zero-sparsity-of-modulus}, $U_1(R)$ contains at most one summand corresponding to the exceptional zero. If $\beta$ exists, then by Lemma \ref{lemma:siegel-theorem-hecke} with $\eps = \frac{1}{2nA}$ and \eqref{equation:mellin-transform-decay} with $\ell = 0$ we have
$$U_1(R) \leq R^{-\frac1n} x^{\beta - 1} \le R^{-\frac1n} x^{\frac{c_1(\eps)}{R^\eps}} \ll_A (\log x)^{- A},$$ where the last step follows by splitting into the two cases $R>e^{(\log x)^{\frac12}}$ and $R\le e^{(\log x)^{\frac12}}$.

Next, we bound $U_2(R)$. Applying partial summation, Lemma \ref{lemma:duke-zero-density-estimate}, and \eqref{equation:mellin-transform-decay}, we have
\begin{align*}
   U_2(R) &\ll_\ell R^{-\frac1n} \sum_{N\fa\le R} ~\sideset{}{^*}\sum_{\mu \bmod \fa}  \sum_{\substack{\mathbf{m}\in \ZZ^d \\ \norm{\mathbf{m}}\le M}} u^{-\ell} x^{\ell}\int_{M}^\infty t^{-\ell}\,dN\bigp{\frac{1}{2}, t, \mu\lam^{\mathbf{m}}}\,dt
   \\&\ll_\ell R^{-\frac1n} \sum_{N\fa\le R} ~\sideset{}{^*}\sum_{\mu \bmod \fa}  \sum_{\substack{\mathbf{m}\in \ZZ^d \\ \norm{\mathbf{m}}\le M}} u^{-\ell} x^{\ell}\bigp{ N\bigp{\frac{1}{2}, M, \mu\lam^{\mathbf{m}}} M^{-\ell} + \int_{M}^\infty N\bigp{\frac{1}{2}, t, \mu\lam^{\mathbf{m}}}\ell t^{-\ell - 1}\, dt}
   \\&\ll_\ell R^{-\frac1n}  u^{-\ell} x^{\ell}\bigp{ R^2 M^{n-\ell} (\log QM)^B + \int_{M}^\infty  R^2 \ell t^{n-\ell-1} (\log Qt)^B \,dt}
   \\&\ll_\ell R^{2-\frac1n} M^n u^{-\ell} M^{-\ell} x^{\ell} (\log x)^B.
\end{align*}
Since $\tau > \tau'$, we conclude that for $\ell$ sufficiently large we have $$U_2(R)\ll Q^{2n-1} M^n x^{- (\tau - \tau')\ell}(\log x)^B \ll_A (\log x)^{-A}.$$

Finally, we bound $U_3(R)$. By Theorem \ref{theorem:coleman-zero-free-region} noting that $M = x^{\tau}$, all non-trivial zeros $\rho \neq \beta$ with $\abs{\Im \rho} \le M$ satisfy $$\Re \rho \le 1 - \frac{c_3}{\max\bigp{\log R, (\log x)^{\frac23} (\log \log x)^{\frac13}}}$$ for some $c_3 > 0$. Denoting $$\sigma^* = 1 - \frac{c_3}{\max\bigp{\log R, (\log x)^{\frac23} (\log \log x)^{\frac13}}},$$ we conclude that
\begin{align*}
    U_3(R) &\ll R^{-\frac1n} \sum_{N\fa\le R} ~\sideset{}{^*}\sum_{\mu \bmod \fa}  \sum_{\substack{\mathbf{m}\in \ZZ^d \\ \norm{\mathbf{m}}\le M}}  \sum_{\substack{L(\rho ,\mu \lam^\mathbf{m}) = 0 \\ \abs{\Im \rho} \le M \\ \rho\neq \beta}} x^{\Re\rho -1}
    \\&\ll R^{-\frac1n} \max_{\sigma \in \bigb{\frac12, \sigma^*}} \sum_{N\fa\le R} ~\sideset{}{^*}\sum_{\mu \bmod \fa}  \sum_{\substack{\mathbf{m}\in \ZZ^d \\ \norm{\mathbf{m}}\le M}} 
    N(\sigma, M, \mu\lam^{\mathbf{m}})x^{\sigma - 1}\log x
    \\&\ll R^{-\frac1n} \max_{\sigma \in \bigb{\frac12, \sigma^*}}(R^2 M^n)^{\min\bigp{\frac{3}{2-\sigma}, \frac{2}{\sigma}}(1-\sigma)}x^{\sigma - 1}(\log x)^{B+1}
    \\&\ll R^{-\frac1n} \max_{\sigma \in \bigb{\frac12, \sigma^*}}(R^2 M^n)^{\frac{5}{2}(1-\sigma)}x^{\sigma - 1}(\log x)^{B+1}.
\end{align*}
Recalling that $M = x^\tau$ and $R\le Q^n = x^{\theta n}$, we have
\begin{align*}
 U_3(R) \ll& R^{-\frac1n} \max_{\sigma \in \bigb{\frac12, \sigma^*}}(x^{2\theta n} x^{\tau n})^{\frac{5}{2}(1 - \sigma)}x^{\sigma - 1} (\log x)^{B+1} \\
 \ll& R^{-\frac1n}\max_{\sigma \in \bigb{\frac12, \sigma^*}}x^{-\bigp{1 - \frac{5n}{2}(2\theta + \tau)}(1-\sigma)} (\log x)^{B+1}
\end{align*}
If $R > e^{(\log x)^{\frac{1}{2}}}$ then $R^{-1/n}\ll_A (\log x)^{-A-B-2}$, so we are done. If $R \leq e^{(\log x)^{\frac{1}{2}}}$, then $$1 - \sigma^* \gg (\log x)^{-\frac{2}{3}} (\log \log x)^{-\frac{1}{3}},$$ so $$x^{-\bigp{1 - \frac{5n}{2}(2\theta + \tau)}(1-\sigma)} \ll_A  (\log x)^{-A-B-2}.$$ We conclude the uniform estimate $U_3(R) \ll_A (\log x)^{-A-1}$ and thus Theorem \ref{theorem:bombieri-vinogradov-galois-primes}.

\newpage

\end{document}